\newtheorem{theorem}{Theorem}
\newtheorem{lemma}{Lemma}
\newtheorem{corollary}{Corollary}
\theoremstyle{definition}
\newtheorem{definition}{Definition}
\theoremstyle{remark}
\newtheorem{remark}{Remark}
\newcommand{\C}{\mathbb{C}}
\newcommand{\R}{\mathbb{R}}
\newcommand{\Z}{\mathbb{Z}}
\newcommand{\pl}{\mathbb{C}P^1}
\newcommand{\Q}{\mathbb{Q}}
\newcommand{\Qb}{\overline{\mathbb{Q}}}
\newcommand{\Dc}{\mathcal{D}}
\newcommand{\G}{\Gamma}
\let\a\alpha
\let\b\beta
\let\d\delta
\let\s\sigma
\let\D\partial
\let\O\Omega
\def\odd{{\rm odd}}
\def\res{{\rm Res}}
\def\pd#1#2{\frac{\D#1}{\D#2}}
\def\tF{{\widetilde F}}
\def\tG{{\widetilde G}}
\def\tL{{\widetilde L}}
\def\tT{{\widetilde T}}
\def\tf{{\tilde f}}
\begin{document}
\date{}
\subjclass[2010]{37K10, 05C30}
\keywords{Grothendieck's ``dessins d'enfants'', ribbon graphs, Virasoro constraints, topological recursion}

\author{M.~Kazarian,~P.~Zograf}
\address{Steklov Mathematical Institute\\8 Gubkin St.\\Moscow 119991 Russia}
\email{kazarian{\char'100}mccme.ru}
\address{St.Petersburg Department of the Steklov Mathematical Institute\\
Fontanka 27\\ St. Petersburg 191023, and
Chebyshev Laboratory of St. Petersburg State University\\
14th Line V.O. 29B\\
St.Petersburg 199178 Russia}
\email{zograf{\char'100}pdmi.ras.ru}

\title[Virasoro constraints and topological recursion]{Virasoro constraints and topological recursion for Grothendieck's dessin counting}


\begin{abstract}
We compute the number of coverings of ${\C}P^1\setminus\{0, 1, \infty\}$ with a given monodromy type over~$\infty$ and given numbers of preimages of 0 and 1. We show that the generating function for these numbers enjoys several remarkable integrability properties: it obeys the Virasoro constraints, an evolution equation, the KP (Kadomtsev-Petviashvili) hierarchy and satisfies a topological recursion in the sense of Eynard-Orantin.
\end{abstract}

\maketitle

\tableofcontents

\section{Introduction and preliminaries}
Enumerative problems arising in various fields of mathematics, from combinatorics and representation theory to algebraic geometry and low-dimensional topology, often bear much in common. In many cases the generating functions associated with these problems exhibit similar behavior -- in particular, they may satisfy 
\begin{itemize}
\item
Virasoro constraints,
\item
Evolution equations of the ``cut-and-join'' type,
\item
Integrable hierarchy (such as Kadomtsev-Petviashvili (KP), Korteveg-\linebreak DeVries (KdV) or Toda equations),
\item
Topological recursion (also known as Eynard-Orantin recursion).
\end{itemize}
Simple Hurwitz numbers provide one of the best studied examples of such an enumerative problem -- indeed, their generating function satisfies the celebrated cut-and-join equation \cite{GJ1}, the Virasoro constraints (via the ELSV theorem \cite{ELSV} 
and the famous Mumford's Grothendieck-Riemann-Roch formula \cite{M} it reduces to the Witten-Kontsevich potential), the KP hierarchy \cite{O}, \cite{KL} or \cite{K}, and the topological recursion \cite{EMS}. Other examples include the Witten-Kontsevich theory, Mirzakhani's Weil-Petersson volumes, Gromov-Witten invariants of the complex projective line, invariants of knots, etc. (see \cite{EO1}, \cite{EO2} for a review).

These remarkable integrability properties of generating functions usually result from matrix model reformulations of the corresponding counting problems. However, in this paper we show that for the enumeration of Grothendieck's {\em dessins d'enfants} all these properties follow from pure combinatorics in a rather straightforward way. 

The origin of Grothendieck's theory of dessins d'enfants~\cite{G} lies in the famous result by Belyi:

\begin{theorem} {\rm (Belyi,~\cite{B})}
A smooth complex algebraic curve~$C$ is defined over the field of algebraic numbers~$\Qb$ if and only if there exist a non-constant meromorphic function~$f$ on~$C$ (or a holomorphic branched cover~$f:C\to\pl$) that is ramified only over the points $0,1,\infty\in\pl$.
\end{theorem}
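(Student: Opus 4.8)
The plan is to prove the two implications by rather different means: the ``only if'' direction by Belyi's explicit construction~\cite{B}, and the ``if'' direction by a rigidity argument for covers branched over three points. For ``only if'', suppose $C$ is defined over $\Qb$ and choose any non-constant morphism $g\colon C\to\pl$ defined over $\Qb$ (one exists because the function field $\Qb(C)$ strictly contains $\Qb$). Since $g$ is defined over $\Qb$, its finite set $B$ of critical values lies in $\pl(\Qb)$. It then suffices to produce a rational map $\psi\colon\pl\to\pl$ defined over $\Q$, unramified outside $\{0,1,\infty\}$, with $\psi(B)\subseteq\{0,1,\infty\}$, for then $f=\psi\circ g$ is ramified only over $\{0,1,\infty\}$; I would build $\psi$ from two reduction steps. \emph{Reducing to rational branch points:} if $B\not\subset\pl(\Q)$, let $d$ be the largest degree over $\Q$ of a finite point of $B$, pick such a point of degree $d\ge 2$, let $P\in\Q[x]$ be its minimal polynomial, and replace $g$ by $P\circ g$. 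The new branch set lies in $P(B)\cup P\bigl(\{P'=0\}\bigr)\cup\{\infty\}$; here the $d$ conjugates of the chosen point all become $0$, the at most $d-1$ critical values of $P$ have degree at most $d-1$, and no branch point of degree exceeding $d$ appears, so the lexicographically ordered pair (largest degree of a non-rational branch point, number of Galois orbits of that degree) strictly decreases. Iterating, we reach $B\subset\pl(\Q)$.

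\emph{Reducing the number of rational branch points:} suppose $B\subset\pl(\Q)$ with $|B|\ge 4$. A M\"obius transformation over $\Q$ lets us assume $\{0,1,\infty\}\subseteq B$, and a further one permuting $\{0,1,\infty\}$ moves a fourth branch point into $(0,1)$, say to $\lambda=\tfrac{m}{m+n}$ with $m,n$ positive integers. Post-compose with
\[
\phi_{m,n}(x)=\frac{(m+n)^{m+n}}{m^{m}\,n^{n}}\,x^{m}(1-x)^{n},
\]
a rational map defined over $\Q$; a short computation of $\phi_{m,n}'$ shows that its only critical values are $0,1,\infty$ and that it sends $0,1\mapsto 0$, $\infty\mapsto\infty$, $\lambda\mapsto 1$. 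Hence the branch set of $\phi_{m,n}\circ g$ lies in $\{0,1,\infty\}\cup\phi_{m,n}\bigl(B\setminus\{0,1,\infty,\lambda\}\bigr)\subset\pl(\Q)$ and has strictly fewer than $|B|$ elements. Iterating until at most three branch points remain and then applying one last M\"obius transformation over $\Q$ finishes this direction.

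For the ``if'' direction, let $f\colon C\to\pl$ be ramified only over $\{0,1,\infty\}$ and put $n=\deg f$. Then $f$ restricts to a connected $n$-sheeted covering $f^{-1}\bigl(\pl\setminus\{0,1,\infty\}\bigr)\to\pl\setminus\{0,1,\infty\}$, and such coverings are classified up to isomorphism by transitive actions of $\pi_{1}\bigl(\pl\setminus\{0,1,\infty\}\bigr)\cong F_{2}$ on an $n$-element set modulo conjugacy --- a \emph{finite} set of classes for each $n$. For $\sigma\in\mathrm{Aut}(\C/\Qb)$ the conjugate pair $(C^{\sigma},f^{\sigma})$ is again a degree-$n$ cover of $\pl$ ramified only over $\{0^{\sigma},1^{\sigma},\infty^{\sigma}\}=\{0,1,\infty\}$, so the $\mathrm{Aut}(\C/\Qb)$-orbit of $(C,f)$ meets only finitely many of these classes. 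Since $\C^{\mathrm{Aut}(\C/\Qb)}=\Qb$, a standard descent argument then shows that $(C,f)$, and a fortiori $C$, is defined over $\Qb$: concretely, I would rigidify by marking a point of $f^{-1}(0)$ (equivalently, a flag of the associated dessin), obtaining a moduli problem represented by a $0$-dimensional scheme of finite type over $\Q$ whose $\C$-points are all $\Qb$-points, and then forget the marking.

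I expect the one genuinely delicate point to be this final descent: finiteness of the Galois orbit only bounds the \emph{field of moduli} of $(C,f)$, and identifying it with an actual \emph{field of definition} is not automatic when $(C,f)$ has nontrivial automorphisms --- which is exactly what rigidifying by a marked point circumvents. Everything else, in particular the degree-and-cardinality bookkeeping in the ``only if'' construction and the verification of the properties of $\phi_{m,n}$, is routine.
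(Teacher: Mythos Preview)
The paper does not contain a proof of this theorem at all: it is stated as a classical result attributed to Belyi~\cite{B} and is used only as background motivation for the notion of a dessin d'enfant. So there is no proof in the paper to compare your attempt against.

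That said, your proposal is the standard proof of Belyi's theorem and is correct in outline. A couple of small remarks. In the first reduction step, the key fact that the critical values of the minimal polynomial $P$ have degree at most $d-1$ over $\Q$ is best justified by noting that they are the roots of the resultant $\mathrm{Res}_x(P(x)-y,P'(x))\in\Q[y]$, which has degree $d-1$; your phrasing ``the at most $d-1$ critical values of $P$ have degree at most $d-1$'' is correct but would benefit from this one line. For the lexicographic decrease you also need (and implicitly use) that $P(\beta)\in\Q(\beta)$ for $\beta\in B$, so images of old branch points never increase degree. In the ``if'' direction, your identification of the delicate point is exactly right: marking a preimage of $0$ kills automorphisms because an automorphism of a connected branched cover fixing a point in a fiber is the identity, and then the rigidified moduli problem is a finite \'etale $\Q$-scheme, so all of its $\C$-points are $\Qb$-points. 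Forgetting the marking then exhibits $(C,f)$, and hence $C$, over $\Qb$.
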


We call~$(C,f)$, where~$C$ is a smooth complex algebraic curve and~$f$ is a meromorphic function on~$C$ unramified over~$\pl\setminus\{0,1,\infty\}$, a {\em Belyi pair}. For a Belyi pair~$(C,f)$ denote by~$g$ the genus of~$C$ and by~$d$ the degree of~$f$. Consider the inverse image~$f^{-1}([0,1])\subset C$ of the real line segment $[0,1]\subset\pl$. This is a connected bicolored graph with~$d$ edges, whose vertices of two colors are the preimages of 0 and 1 respectively, and the ribbon graph structure is induced by the embedding $f^{-1}([0,1])\hookrightarrow C$. (Recall that a ribbon graph structure is given by prescribing a cyclic order of half-edges at each vertex of the graph.)
The following is straightforward (cf. also~\cite{LZ}):

\begin{lemma}\label{Gr}{\rm (Grothendieck,~\cite{G})}
There is a one-to-one correspondence between the isomorphism classes of Belyi pairs and connected bicolored ribbon graphs.
\end{lemma}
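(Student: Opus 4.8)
The plan is to build the map inverse to the one described in the text---from a connected bicolored ribbon graph to a Belyi pair---and then check that the two assignments are mutually inverse and respect isomorphisms. Recall the forward assignment: from a Belyi pair $(C,f)$ one forms $\G=f^{-1}([0,1])$, whose edges are the $d$ connected components of $f^{-1}((0,1))$ (each mapped homeomorphically onto $(0,1)$), whose two classes of vertices are $f^{-1}(0)$ and $f^{-1}(1)$, and where at a vertex over $0$ or $1$ at which $f$ has local degree $k$ the $k$ incident half-edges are cyclically ordered by the orientation of $C$ through the local normal form $\zeta\mapsto\zeta^k$. The complement $C\setminus\G=f^{-1}(\pl\setminus[0,1])$ is a branched cover of the disk $\pl\setminus[0,1]$ ramified only over $\infty$, hence a disjoint union of open disks; since $C$ is connected and is obtained from $\G$ by gluing these disks along their boundary circles, $\G$ is connected.

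For the inverse, thicken a connected bicolored ribbon graph $\G$ with $d$ edges: replace each edge by an oriented ribbon (a rectangle) and glue the ribbons at each vertex along their ends following the prescribed cyclic order, obtaining a compact oriented surface with boundary onto which $\G$ deformation-retracts; gluing a disk along each of its boundary circles produces a closed oriented surface $C$. Define a continuous map $f\colon C\to\pl$ that sends each ribbon onto a neighborhood of $[0,1]$ with its two ends over the points $0$ and $1$ prescribed by the two colors, so that the $k$ ribbon-ends meeting at a vertex over $0$ (resp.\ over $1$) of valence $k$ fit together into a disk mapped by $\zeta\mapsto\zeta^k$ (resp.\ $\zeta\mapsto1-\zeta^k$) onto a neighborhood of $0$ (resp.\ $1$), while each capping disk is mapped onto a neighborhood of $\infty$ as a branched cover $\zeta\mapsto\zeta^{-m}$ of the appropriate degree $m$. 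Then $f$ is a topological branched covering of $\pl$ unramified over $\pl\setminus\{0,1,\infty\}$, with $f^{-1}([0,1])=\G$.

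By the Riemann existence theorem there is a unique structure of a Riemann surface on $C$---necessarily compatible with the given orientation, and algebraic since $C$ is compact---for which $f$ becomes holomorphic; explicitly, pull back the complex structure of $\pl$ over $\pl\setminus\{0,1,\infty\}$ and extend it across the finitely many preimages of $0,1,\infty$ through the charts $\zeta\mapsto\zeta^k$. This yields a Belyi pair $(C,f)$, and by construction the forward assignment sends it back to $\G$ with its ribbon structure. This step can equivalently be packaged through monodromy: since $\pi_1(\pl\setminus\{0,1,\infty\})$ is free on small loops around $0$ and $1$, connected degree-$d$ coverings of $\pl$ unramified outside $\{0,1,\infty\}$ correspond, by the Riemann existence theorem, to pairs of permutations of $\{1,\dots,d\}$ generating a transitive subgroup, modulo simultaneous conjugation, and such a pair $(\sigma_0,\sigma_1)$ is exactly the combinatorial datum of a connected bicolored ribbon graph whose white vertices are the cycles of $\sigma_0$, black vertices the cycles of $\sigma_1$, and edges the $d$ permuted symbols.

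It remains to check that, conversely, starting from $(C,f)$, forming $\G=f^{-1}([0,1])$ and then thickening and capping recovers $(C,f)$: a closed regular neighborhood of $f^{-1}([0,1])$ in $C$ is precisely the thickening of $\G$, and re-attaching the disks $f^{-1}(\pl\setminus[0,1])$ along their boundaries reconstructs $C$ as a topological branched cover of $\pl$, which by uniqueness in the Riemann existence theorem is biholomorphic to $(C,f)$ over $\pl$. Finally, a morphism of Belyi pairs is a biholomorphism $\phi\colon C\to C'$ with $f'\circ\phi=f$; it restricts to an isomorphism of bicolored ribbon graphs $f^{-1}([0,1])\to(f')^{-1}([0,1])$, and conversely an isomorphism of ribbon graphs extends over ribbons and capping disks, and then over the complex structures by uniqueness, to an isomorphism of Belyi pairs, so the bijection descends to isomorphism classes. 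The only ingredient above that is not pure combinatorics or elementary surface topology is the appeal to the Riemann existence theorem to equip the thickened topological surface with the complex structure making $f$ holomorphic, and this is the step to treat with care; the remainder reduces to fixing orientation conventions so that the abstract cyclic order of half-edges agrees with the cyclic order of ribbon-ends induced by the orientation of $C$, after which the verifications are routine.
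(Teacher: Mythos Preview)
Your argument is correct and is essentially the standard proof of this classical fact. Note, however, that the paper does not actually prove this lemma: it is stated as ``straightforward'' with a reference to~\cite{LZ} (and attributed to Grothendieck), so there is no proof in the paper to compare against. Your construction---thickening the ribbon graph to a surface with boundary, capping with disks, defining the branched cover to $\pl$ with prescribed local models $\zeta\mapsto\zeta^k$, and invoking the Riemann existence theorem to endow the result with a complex structure---is precisely the argument one finds in the cited reference, as is the equivalent monodromy description via transitive pairs $(\sigma_0,\sigma_1)$ in $S_d$ up to conjugation.
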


\begin{definition}
A connected bicolored ribbon graph representing a Belyi pair is called Grothendieck's {\em dessin d'enfant}.\footnote{An important observation of Grothendieck that, by Belyi's theorem, the absolute Galois group ${\rm Gal}(\Qb/\Q)$ naturally acts on dessins,
lies beyond the scope of this paper; we refer the reader to \cite{LZ} for details.}
\end{definition}

Let~$(C,f)$ be a Belyi pair of genus~$g$ and degree~$d$, and let~$\G=f^{-1}([0,1])\hookrightarrow C$ be the corresponding dessin. Put $k=|f^{-1}(0)|,\;l=|f^{-1}(1)|$ and $m=|f^{-1}(\infty)|$, then we have $2g-2=d-(k+l+m)$. We assume that the poles of $f$ are labeled and denote the set of their orders by $\mu=(\mu_1,\ldots,\mu_m)$, so that $d=\sum_{i\geq 1}\mu_i$. The triple~$(k,l,\mu)$ will be called here the {\em type} of the dessin~$\G$, and the set of all dessins of type~$(k,l,\mu)$ will be denoted by~$\Dc_{k,l;\mu}$.

Actually, instead of the dessin $\G=f^{-1}([0,1])$ corresponding to a Belyi pair~$(C,f)$ it is more convenient to consider the graph $\G^*=\overline{f^{-1}(1/2+\sqrt{-1}\R)}$ dual to $\Gamma$ (where the bar denotes the closure in $C$), see Fig.~\ref{dual}. The graph $\G^*$ is connected, has $m$ ordered vertices of even degrees $2\mu_1,\ldots,2\mu_m$ at the poles of $f$ and inherits a natural ribbon graph structure. Moreover, the boundary components (faces) of $\G^*$ are naturally colored: a face is colored in white (resp. in gray) if it contains a preimage of 0 (resp. 1), and every edge of $\G^*$ belongs to precisely two boundary components of different color.

\begin{figure}[hbt]%
 \begin{center}
 \includegraphics[width=5cm]{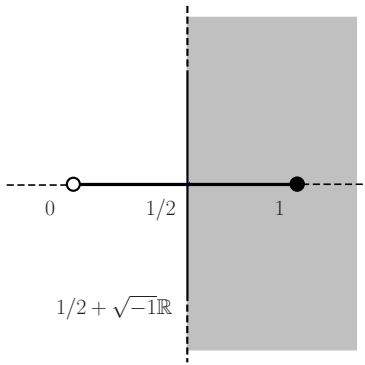}
 \caption{Decomposition of $\pl$ into two 1-gons.}
 \label{dual}%
\end{center}
\end{figure}

In this paper we are interested in the weighted count of labeled dessins d'enfants of a given type. Namely, define
\begin{align*}
N_{k,l}(\mu)=N_{k,l}(\mu_1,\ldots,\mu_m)=\sum_{\G\in\Dc_{k,l,\mu}}\frac{1}{|{\rm Aut}_b \G|}\;,
\end{align*}
where ${\rm Aut}_b \G$ denotes the group of automorphisms of $\G$ that preserve the boundary componentwise.\footnote{Equivalently, we
can put $N_{k,l}(\mu)=\sum_{\G\in\Dc_{k,l,\mu}}\frac{1}{|{\rm Aut}_v \G^*|}\;,$
where ${\rm Aut}_v \G^*$ is the group of automorphisms of the dual graph $\G^*$ preserving each vertex pointwise. A closely related problem of the weighted count of unlabeled dessins $\G$ with weights $\frac{1}{|{\rm Aut}\,\G|}$ is equivalent to the above one.
If one treats $\mu$ as the unordered partition $[1^{m_1}2^{m_2}\ldots]$, where $m_j=\#\{\mu_i=j\}$, then the corresponding number of dessins of type $(k,l,\mu)$ is equal to $\frac{1}{|{\rm Aut}\,\mu|}\,N_{k,l}(\mu)$ with
$|{\rm Aut}\,\mu|=m_1!m_2!\ldots\;.$}
Consider the total generating function
\begin{align}\label{gf}
F(s,u,v,p_1,p_2,\dots) = \sum_{k,l,m\geq 1}\frac{1}{m!}\sum_{\mu\in\Z_+^m} N_{k,l}(\mu) s^{d} u^k v^l\, p_{\mu_1}\ldots p_{\mu_m}\;,
\end{align}
where the second sum is taken over all ordered sets $\mu=(\mu_1,\ldots,\mu_m)$ of positive integers, and
$d=\sum_{i=1}^m \mu_i$.

The objective of this paper is to show that the generating function $F$ satisfies all four integrability properties listed at the beginning of this section -- namely, Virasoro constraints, an evolution equation, the KP (Kadomtsev-Petviashvili) hierarchy and a topological recursion. We prove the Virasoro constraints by a bijective combinatorial argument and derive from them all other properties of $F$.\footnote{While this paper was in preparation, similar results were independently obtained by matrix integration methods in \cite{AC} and generalized further in \cite{AMMN}.}  As a result, we obtain a simpler version of the topological recursion in terms of homogeneous components of $F$. We also revisit the problem of enumeration of the ribbon graphs with a prescribed boundary type. Topological recursion for this problem was first established in \cite{EO2} (cf. also \cite{DMSS}). In this paper we give a different, more streamlined proof of it based on the Virasoro constraints and show that the corresponding generating function satisfies an evolution equation and the KP hierarchy as well. These (and other) examples convincingly demonstrate that Virasoro constraints imply topological recursion and are in fact equivalent to it.

Additionally, we show how our results can be applied to effectively enumerate orientable maps and hypermaps regardless of the boundary type. In particular, we present a very straightforward derivation of the famous Harer-Zagier recursion \cite{HZ} for the numbers of genus $g$ polygon gluings from the Walsh-Lehman formula \cite{WL} (a higher genus generalization of Tutte's recursion \cite{T}).\footnote{Recently we came across the paper \cite{CC} where this result was proven along similar lines. However, the authors of \cite{CC} do not explicitly use Virasoro constraints that considerably simplify and clarify the proof.}

\section{Virasoro constraints}

\subsection{Virasoro constraints for the numbers of dessins}

For any integer~$n\geq 0$ consider the differential operator
\begin{multline}
L_n=-\frac{n+1}{s}\frac{\partial}{\partial p_{n+1}}+(u+v)n\frac{\partial}{\partial p_{n}}
+\sum_{j=1}^\infty p_j(n+j)\frac{\partial}{\partial p_{n+j}}\\
{}+\sum_{i+j=n}ij\frac{\partial^2}{\partial p_{i} \partial p_{j}}+\delta_{0,n}uv\;.\label{V}
\end{multline}

A straightforward check shows that for any integer~$m,n\geq 0$
\begin{align*}
[L_m, L_n]=(m-n)L_{m+n}\,.
\end{align*}
In other words, the operators~$L_n$ form (a half of) a representation of the Virasoro (or, rather, Witt) algebra.

The main technical statement of this section is the following

\begin{theorem}\label{Virasoro}
The partition function $e^F=e^{F(s,u,v,p_1,p_2,\dots)}$ satisfies the infinite system of non-linear differential equations (Virasoro constraints)
\begin{align}\label{cons}
L_ne^F=0\;.
\end{align}
The equations~\eqref{cons} determine the partition function~$e^F$ uniquely.
\end{theorem}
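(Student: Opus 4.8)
The plan is to reduce the nonlinear equation $L_n e^F=0$ to a ``cut-and-join''-type equation for $F$ itself and then read off both assertions from the combinatorics of dessins. Since every monomial of $F$ has $s$-degree $d=\sum\mu_i\ge 1$, the series $e^F$ is invertible in $\C[[s,u,v,p_1,p_2,\dots]]$, so $e^{-F}L_n e^F=0$ is equivalent to
\begin{multline*}
\frac{n+1}{s}\,\pd{F}{p_{n+1}}=(u+v)\,n\,\pd{F}{p_n}+\sum_{j\ge1}p_j(n+j)\,\pd{F}{p_{n+j}}\\
{}+\sum_{i+j=n}ij\left(\frac{\D^2 F}{\D p_i\,\D p_j}+\pd{F}{p_i}\pd{F}{p_j}\right)+\d_{0,n}uv .
\end{multline*}

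The heart of the proof is to establish this identity bijectively. Recall that a dessin of type $(k,l,\mu)$ is a connected bicolored ribbon graph with $d=\sum\mu_i$ edges, $k$ white and $l$ black vertices, and $m$ faces of degrees $2\mu_1,\dots,2\mu_m$, and that the coefficient of $s^d u^k v^l p_{\mu_1}\cdots p_{\mu_m}$ in $F$ is $N_{k,l}(\mu)$. On the left-hand side, $\frac{n+1}{s}\pd{F}{p_{n+1}}$ enumerates dessins carrying a marked face of degree $2(n+1)$ together with a choice of one of its $n+1$ white corners (the $1/s$ reflecting that such a dessin has one more edge than the configurations on the right). I would attach to each pair (dessin, marked white corner) a canonical edge $e$ -- the edge traversed immediately after the marked corner along the oriented face boundary -- and delete it. Since a bicolored graph has no loops, exactly the following mutually exclusive cases occur, matching the terms on the right in order: (a) $e$ is a pendant edge inside the marked face, attached to a white (resp. black) leaf, whose deletion lowers the face degree to $2n$ and $k$ (resp. $l$) by one -- this yields $u\,n\,\pd{F}{p_n}$ (resp. $v\,n\,\pd{F}{p_n}$), the factor $n$ counting the black (resp. white) corners of a face of degree $2n$ at which such a leaf can be reattached; (b) $e$ is a non-leaf edge bordering the marked face on one side and a distinct face of degree $2j$ on the other, whose deletion merges them into a face of degree $2(n+j)$ -- the term $p_j(n+j)\pd{F}{p_{n+j}}$; (c) $e$ is a non-leaf edge bordering the marked face on both sides whose deletion keeps the dessin connected, splitting the marked face into faces of degrees $2i$, $2j$ with $i+j=n$ -- the term $ij\,\D^2F/\D p_i\,\D p_j$; (d) as in (c) but the deletion disconnects the dessin into two components carrying faces of degrees $2i$ and $2j$ -- the term $ij\,\pd{F}{p_i}\pd{F}{p_j}$. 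Finally, for $n=0$ the single-edge dessin ($k=l=m=1$, $\mu=(1)$) has a degree-$2$ face with a unique white corner but no edge whose deletion leaves a nonempty dessin; it is accounted for by $\d_{0,0}uv$. The genuinely delicate point is exactly this verification: choosing the canonical edge so the cases are exhaustive and disjoint, and carrying out the corner-counting so that the multiplicities $n$, $n+j$, $ij$ and the bijectivity come out precisely.

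For uniqueness I would induct on the $s$-degree $d$ using the displayed equation. Extracting the coefficient of $s^{\,d-1}u^kv^l p_{\mu_1}\cdots p_{\mu_m}$ from both sides shows that the left-hand side equals $(n+1)$ times the coefficient of $s^{d}u^kv^l p_{n+1}p_{\mu_1}\cdots p_{\mu_m}$ in $F$, where $d=(n+1)+\sum\mu_i$, while every term on the right involves only coefficients of $F$ of $s$-degree $d-1$, or products of two such whose $s$-degrees sum to $d-1$ (hence each strictly below $d$), plus the explicit constant when $d=1$. Since every monomial of $F$ contains at least one $p$-variable (every dessin has a face), each coefficient of $F$ occurs on the left-hand side for a suitable $n$ and $\mu$ and is thereby expressed uniquely through coefficients of strictly smaller $s$-degree; the base $d=1$ is fixed by $\d_{0,0}uv$ (only the one-edge dessin, with $k=l=1$). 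Hence $F$, and therefore $e^F$ under the normalization $e^F|_{s=0}=1$ forced by $F|_{s=0}=0$, is the unique solution of \eqref{cons}.
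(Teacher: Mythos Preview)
Your proof is correct and follows essentially the same strategy as the paper: both reduce $L_n e^F=0$ to the ``cut-and-join'' identity for $F$, establish it by a bijective edge-removal argument, and deduce uniqueness by induction on the number of edges. The only difference is a choice of dual model. The paper works with the dual graph $\Gamma^*$ (vertices at the poles, of degrees $2\mu_i$), marks one of the $2\mu_1$ half-edges at the first vertex, and \emph{contracts} the corresponding edge; its three cases are ``edge joins distinct vertices'', ``loop bounding a monogon'', ``loop not bounding a monogon''. You work with the primal bipartite dessin $\Gamma$, mark one of the $\mu_1=n+1$ white corners of the first face, and \emph{delete} the edge following that corner; your cases (b), (a), (c)/(d) correspond exactly to the paper's (i), (ii), (iii) under graph duality (contraction in $\Gamma^*$ $\leftrightarrow$ deletion in $\Gamma$, vertex degrees $\leftrightarrow$ face perimeters, monogonal face of $\Gamma^*$ $\leftrightarrow$ leaf of $\Gamma$). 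Your observation that $\Gamma$ has no loops lets you avoid the overall factor of~$2$ that the paper carries through its half-edge count, which is a mild simplification; otherwise the two arguments are the same bijection viewed from dual sides. Your uniqueness argument by induction on the $s$-degree is likewise the same as the paper's observation that the constraints are equivalent to the recursion~(\ref{vt}).
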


\begin{proof}
The Virasoro constraints~\eqref{cons} can be re-written as follows:
\begin{align}
\frac{n+1}{s}\frac{\partial F}{\partial p_{n+1}}&=
\sum_{j=1}^\infty p_j(n+j)\frac{\partial F}{\partial p_{n+j}}+(u+v)n\frac{\partial F}{\partial p_{n}}\nonumber\\
&+\sum_{i+j=n}ij\left(\frac{\partial^2 F}{\partial p_{i} \partial p_{j}}+
\frac{\partial F}{\partial p_{i}} \frac{\partial F}{\partial p_{j}}\right) + \delta_{0,n}uv\;.\label{vc}
\end{align}
Eq.~(\ref{vc}) for $n+1=\mu_1$ can be further re-written as a recursion relation for the coefficients $N_{k,l}(\mu)$ of $F$:
\begin{align}
\mu_1\,N_{k,l}(\mu_1,&\ldots,\mu_m)
=\sum_{j=2}^m (\mu_1+\mu_j-1)N_{k,l}(\mu_1+\mu_j-1,\mu_2,\ldots,\widehat{\mu}_j,\ldots,\mu_m)\nonumber\\
&{}+(\mu_1-1)(N_{k-1,l}(\mu_1-1,\mu_2,\ldots,\mu_m)+N_{k,l-1}(\mu_1-1,\mu_2,\ldots,\mu_m))\nonumber\\
&{}+\sum_{i+j=\mu_1-1}ij \bigg(N_{k,l}(i,j,\mu_2,\ldots,\mu_m)\nonumber\\
&\qquad{}+\mathop{\sum_{k_1+k_2=k}}_{l_1+l_2=l}\quad\sum_{I\sqcup J=\{2,\ldots,m\}}
N_{k_1,l_1}(i,\mu_I)N_{k_2,l_2}(j,\mu_J)\bigg)\;,\label{vt}
\end{align}
where $\mu_I=\mu_{i_1},\ldots,\mu_{i_k},\;I=\{i_1,\ldots,i_k\}$, and the hat means that the corresponding term is omitted.\footnote{Formula (\ref{vt}) is a ``bicolored" analogue of Tutte's recursion, cf. \cite{T}, Eq.~2.1, for $g=0$ and \cite{WL}, Eq.~(6), for any $g\geq 0$ (a more general form of Tutte's recursion one can find, e.~g., in \cite{EO2}).} This recursion is valid for $\sum_{i=1}^m\mu_i>1$ and expresses the numbers~$N_{k,l}(\mu)$ recursively in terms of~$N_{1,1}(1)=1$.

We prove this recursion similar to \cite{WL} (cf. also \cite{DMSS}, \cite{EO2}, \cite{N}) by establishing a direct bijection between dessins counted in the left and right hand sides of~\eqref{vt}. Here it is more convenient to deal with the dual graphs instead.
Let~$\G^*$ be the ribbon graph dual to a dessin $\G$ of type $(k,l,\mu)$. There are $2\mu_1$ ways to pick a half-edge incident to the first vertex of $\G^*$. Following \cite{DMSS} we label this half-edge with an arrow (labeling of half-edges allow us to forget about nontrivial automorphisms). When $\G$ varies over the set $\Dc_{k,l,\mu}$, this gives twice the number in the l.h.s. of~\eqref{vt}.

Let us now express the same number in terms of dessins with one edge less. This can be done by contracting (or expanding) the labeled edges in the dual graphs in a way that preserves the proper coloring of faces. The following possibilities can occur:

\begin{enumerate}[(i)]
\item The labeled edge connects the first vertex with the $j$-th vertex, $j\neq 1$. Contracting this edge we get a ribbon graph with properly bicolored faces of type $(k,l,\mu_1+\mu_j-1,\mu_2,\ldots,\widehat{\mu}_j,\ldots,\mu_m)$, see~Fig.~\ref{contract}.
Conversely, given a graph of type $(k,l,\mu_1+\mu_j-1,\mu_2,\ldots,\widehat{\mu}_j,\ldots,\mu_m)$, there are $2(\mu_1+\mu_j-1)$ ways to split its first vertex into two ones of degrees $2\mu_1$ and $2\mu_j$. Since $j$ can vary from 2 to $m$, this gives twice the first sum in the r.h.s. of ~\eqref{vt}.

\begin{figure}[hbt]%
 \begin{center}
 \includegraphics[width=9cm]{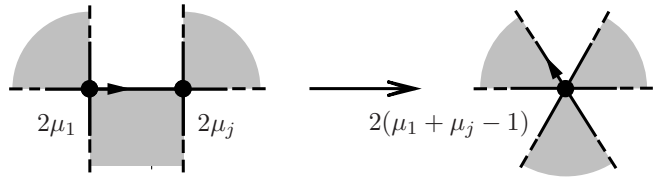}
 \caption{Contracting an edge with different endpoints.}
 \label{contract}%
\end{center}
\end{figure}

\item The labeled edge forms a loop that bounds a white 1-gon, see~Fig.~\ref{loop}. Contracting such a loop we reduce both $k$ and $\mu_1$ by 1, leaving $l$ and $\mu_j,\; j=2,\ldots,m,$ unchanged. Conversely, if we have a graph of type $(k-1,l,\mu_1-1,\mu_2,\ldots,\mu_m$, we can insert a loop into any of the $\mu_1-1$ gray sectors at the first vertex in order to get a graph of type $(k,l,\mu_1,\ldots,\mu_m)$, and 2 ways to label one of its half-edges. The case of a loop bounding a gray 1-gon can be treated verbatim, giving twice the second term in the r.h.s. of~\eqref{vt}.

\begin{figure}[hbt]%
 \begin{center}
 \includegraphics[width=8cm]{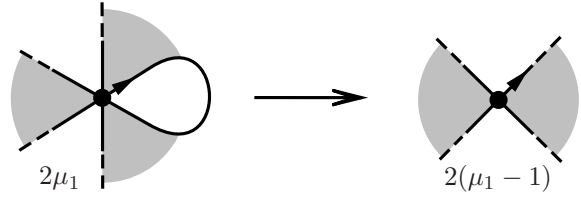}
 \caption{Contracting a loop that bounds a 1-gon.}
 \label{loop}%
\end{center}
\end{figure}

\item The labeled edge forms a loop whose half-edges are not adjacent relative to the cyclic order of half-edges at the first vertex. Contracting such a loop we split the first vertex into two ones, say, of degrees $2i$ and $2j$, where $i+j=\mu_1-1$, see Fig.~\ref{split}. Under this operation the graph may remain connected, or may split into two connected components. In the former case we get a graph of type $(k,l,i,j,\mu_2,\ldots,\mu_m)$. Reversing this operation, we join the first two vertices and add a loop. We can place the labeled half-edge of the loop in any of the $2i$ sectors at the first vertex, but its other half-edge can be placed only in one of $j$ sectors of different color at the second vertex (otherwise it will not be compatible with the face coloring). This gives us twice the third term in the r.h.s. of~\eqref{vt}. The latter case when the graph becomes disconnected can be treated similarly. 

\begin{figure}[hbt]%
 \begin{center}
 \includegraphics[width=8cm]{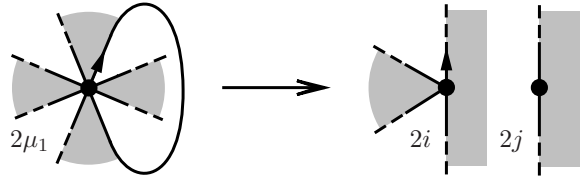}
 \caption{Contracting a loop that splits the vertex into two ones of degrees $2i$ and $2j$ with $i+j=\mu_1-1$.}
 \label{split}%
\end{center}
\end{figure}
\end{enumerate}

The operations (i)--(iii) are reversible and compatible with the face coloring, thus establishing a required bijection. This proves the Virasoro constraints~\eqref{vc}. It is also not hard to see that the Virasoro constraints determine the partition function~$e^F$ uniquely, since they are equivalent to the recursion~\eqref{vt}. 
\end{proof}

\begin{corollary}\label{evo}
Put
\begin{align*}
\Lambda_1&=\sum_{i=2}^\infty (i-1)p_i\,\frac{\partial}{\partial p_{i-1}}\;,\nonumber\\
M_1&=\sum_{i=2}^\infty \sum_{j=1}^{i-1} \left((i-1)p_j p_{i-j}\,\frac{\partial}{\partial p_{i-1}}
+ j(i-j) p_{i+1}\,\frac{\partial^2}{\partial p_j \partial p_{i-j}}\right)\;.
\end{align*}
Then the partition function~$e^F$ satisfies the evolution equation
\begin{align}
\frac{\partial e^F}{\partial s}=((u+v)\Lambda_1+M_1+uvp_1)e^F\;,\label{eveq}
\end{align}
and is uniquely determined by the initial condition $F\left|_{s=0}\right.=0$. In other words,~$e^F$ is explicitly given by the formula
\begin{align*}
e^F=e^{s((u+v)\Lambda_1+M_1+uvp_1)}\,1
\end{align*}
were ``1'' stands for the constant function identically equal to~1.
\end{corollary}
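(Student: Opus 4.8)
The plan is to derive \eqref{eveq} as a single linear combination of the Virasoro constraints \eqref{cons}, and then to integrate the resulting first-order equation using the $s$-grading. The one structural fact needed is the homogeneity of $F$: since every monomial $s^{d}u^{k}v^{l}p_{\mu_1}\cdots p_{\mu_m}$ occurring in \eqref{gf} has $d=\sum_i\mu_i$, the Euler operators $s\,\partial/\partial s$ and $\sum_{j\ge1}jp_j\,\partial/\partial p_j$ act identically on $F$, hence, being derivations, on $e^F$; in particular $s\,\partial e^F/\partial s=\sum_{j\ge1}jp_j\,\partial e^F/\partial p_j$.

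I would then multiply $L_ne^F=0$ by $p_{n+1}$ and sum over $n\ge0$ (legitimate, since only finitely many $n$ contribute to any fixed monomial of the result). The leading terms $-\frac{n+1}{s}\,p_{n+1}\,\partial/\partial p_{n+1}$ add up to $-\frac1s\sum_{m\ge1}mp_m\,\partial/\partial p_m$, which by the homogeneity relation equals $-\partial/\partial s$ on $e^F$. It then remains only to recognise the three remaining families of terms. Setting $i=n+1$ turns $\sum_{n\ge1}(u+v)n\,p_{n+1}\,\partial/\partial p_{n}$ into $(u+v)\Lambda_1$; setting $a=n+1$, $b=j$ turns $\sum_{n\ge0}\sum_{j\ge1}(n+j)p_{n+1}p_j\,\partial/\partial p_{n+j}$ into $\sum_{a,b\ge1}(a+b-1)p_ap_b\,\partial/\partial p_{a+b-1}$, which is exactly the first half of $M_1$ (regroup the double sum there by $a=j$, $b=i-j$); and $\sum_{n\ge2}\sum_{i+j=n}ij\,p_{n+1}\,\partial^2/\partial p_i\partial p_j$ becomes $\sum_{a,b\ge1}ab\,p_{a+b+1}\,\partial^2/\partial p_a\partial p_b$, the second half of $M_1$. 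Finally the $n=0$ term contributes $uvp_1\cdot e^F$, and assembling the pieces gives \eqref{eveq}.

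For the explicit formula and the uniqueness I would use the $s$-grading once more. Write $e^F=\sum_{d\ge0}G_d\,s^{d}$, where $G_d$ is a polynomial of weighted degree $d$ in $p_1,p_2,\dots$ (of bounded degree in $u,v$); here $G_0=1$, equivalently $F|_{s=0}=0$. Each of $\Lambda_1$, $M_1$ and multiplication by $p_1$ raises the weighted $p$-degree by exactly $1$, so $\mathcal{A}:=(u+v)\Lambda_1+M_1+uvp_1$ carries the degree-$(d-1)$ part to the degree-$d$ part; comparing coefficients of $s^{d-1}$ in \eqref{eveq} yields the triangular recursion $d\,G_d=\mathcal{A}\,G_{d-1}$, hence $G_d=\frac1{d!}\mathcal{A}^{d}1$ and $e^F=\sum_{d\ge0}\frac{s^d}{d!}\mathcal{A}^{d}1=e^{s\mathcal{A}}1$. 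The same recursion shows that this is the unique solution with the prescribed initial condition.

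The whole argument is essentially bookkeeping; the only step that requires care is the reindexing in the second paragraph, and what makes the final step go through is the observation that $\mathcal{A}$ is homogeneous of degree $+1$ for the weighted $p$-grading, so that the $s$-expansion of \eqref{eveq} decouples into a solvable triangular recursion rather than a genuine differential equation.
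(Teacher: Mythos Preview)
Your proof is correct and follows essentially the same route as the paper: multiply $L_ne^F=0$ by $p_{n+1}$, sum over $n\ge0$, and identify the pieces with $\Lambda_1$, $M_1$, and $uvp_1$. The one place where you are actually more careful than the paper is the homogeneity step: the paper writes the operator identity $\sum_{n\ge0}p_{n+1}L_n=-\partial/\partial s+(u+v)\Lambda_1+M_1+uvp_1$ without comment, silently replacing $\tfrac1s\sum_m mp_m\,\partial/\partial p_m$ by $\partial/\partial s$, whereas you explicitly justify this replacement via the $s$-grading of $F$. Your treatment of uniqueness and the exponential formula via the triangular recursion $dG_d=\mathcal{A}G_{d-1}$ is likewise more detailed than the paper, which simply states the result.
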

\begin{proof}
Multiply the both sides of~\eqref{V} by~$p_{n+1}$ and sum over~$n$. We get
\begin{align*}
\sum_{n=0}^\infty p_{n+1}L_n
&=\sum_{n=0}^\infty p_{n+1}\left(-\frac{n+1}{s}\frac{\partial}{\partial p_{n+1}}+(u+v)n\frac{\partial}{\partial p_{n}}\right.\nonumber\\
&\hspace{0.4in}+\sum_{j=1}^\infty p_j(n+j)\frac{\partial}{\partial p_{n+j}}
+\sum_{i+j=n}\left. ij\frac{\partial^2}{\partial p_{i} \partial p_{j}}\right)+uvp_1\nonumber\\
&=-\frac{\partial}{\partial s}+(u+v)\Lambda_1+M_1+uvp_1\;,
\end{align*}
and the required statement immediately follows from~\eqref{cons}.
\end{proof}

\begin{remark}
A different proof of Corollary \ref{evo} has recently appeared in \cite{Z}.
\end{remark}

\begin{remark}\label{ps}
Denote by $\psi$ the principal specialization of the partition function $e^F$:
\begin{align*}
\psi=\psi(s,t,u,v)=e^{F(s,u,v,p_1,p_2,\ldots)}\left|_{p_i=t^i}\right.\;,
\end{align*}
where $t$ is a new formal variable. It is not hard to check that
\begin{align*}
\Lambda_1 e^F\left|_{p_i=t^i}\right.=t\left(t\frac{d}{dt}\right)\psi\,, \qquad M_1 e^F\left|_{p_i=t^i}\right.=t\left(t\frac{d}{dt}\right)^2\psi\,.
\end{align*}
Then, with the help of the obvious identity $s\frac{\D\psi}{\D s}=t\frac{\D\psi}{\D t}$, the evolution equation \eqref{eveq} translates into the following equation for the wave function $\psi$:
\begin{align*}
\frac{1}{st}\left(t\frac{d}{dt}\right)\psi=(u+v)\left(t\frac{d}{dt}\right)\psi+\left(t\frac{d}{dt}\right)^2\psi+uv\psi\;.
\end{align*}
It can be further rewritten as the Schr\"odinger equation
\begin{align}\label{qc}
t^2\,\frac{d^2\psi}{dt^2}+\left((u+v+1)\,t-\frac{1}{s}\right)\,\frac{d\psi}{dt}+uv\psi=0\;
\end{align}
Eq.~\eqref{qc} is often referred to as the {\em quantum curve equation} in the literature on topological recursions.
Note that the coefficients of $\log\psi$ enumerate dessins with given numbers of white and black vertices, and a given number of edges regardless of genus (or the number of boundary components).
\end{remark}

Another observation is that the generating function $F=F(s,u,v,p_1,p_2,\dots)$ satisfies an infinite system of non-linear partial differential equations called the KP (Kadomtsev-Petviashvili) hierarchy (this means that the numbers~$N_{k,l}(\mu)$ additionally obey an infinite system of recursions). The KP hierarchy is one of the best studied completely integrable systems in mathematical physics. Below are the first few equations of the hierarchy:
\begin{equation}\label{KP}
\begin{aligned}
&F_{22}=-\frac12\,F_{11}^2+F_{31}-\frac1{12}\,F_{1111}\;,\\
&F_{32}=-F_{11}F_{21}+F_{41}-\frac16F_{2111}\;,\\
&F_{42}=-\frac12\,F_{21}^2-F_{11}F_{31}+F_{51}+\frac18\,F_{111}^2
+\frac1{12}\,F_{11}F_{1111}-\frac14\,F_{3111}+\frac1{120}\,F_{111111}\;,\\
&F_{33}=\frac13\,F_{11}^3-F_{21}^2-F_{11}F_{31}+F_{51}
+\frac14\,F_{111}^2+\frac13\,F_{11}F_{1111}-\frac13\,F_{3111}\\
&\hspace{3.5in}+\frac1{45}\,F_{111111}\;,
\end{aligned}
\end{equation}
where the subscript~$i$ stands for the partial derivative with respect to~$p_i$.

The exponential~$Z=e^F$ of any solution is called a {\em tau function} of the hierarchy. The space of solutions (or the space of tau functions) has a nice geometric interpretation as an infinite-dimensional Grassmannian (called the {\em Sato Grassmannian}), see~\cite{MJD} or~\cite{K} for details. In particular, the space of solutions is homogeneous: there is a Lie algebra $\widehat{\mathfrak{gl}(\infty)}$ that acts infinitesimally on the space of solutions, and the action of the corresponding Lie group is transitive.

\begin{corollary}\label{tau}
The generating function $F=F(s,u,v,p_1,p_2,\dots)$ satisfies the infinite system of KP equations~\eqref{KP} with respect to $p_1,p_2,\dots$ for any parameters~$s,u,v$. Equivalently, the partition function~$Z=e^F$ is a 3-parameter family of KP tau functions.
\end{corollary}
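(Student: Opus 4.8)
The plan is to write $e^F$ explicitly as an Orlov--Shcherbin hypergeometric tau function, for which membership in the $\widehat{GL}(\infty)$-orbit of the vacuum (hence the KP property) is classical. This route sidesteps the operator $\widehat G=(u+v)\Lambda_1+M_1+uvp_1$ of Corollary~\ref{evo}, which is \emph{not} itself an element of $\widehat{\mathfrak{gl}(\infty)}$, precisely because of the cubic term $M_1$.

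\smallskip\noindent\emph{Step 1 (a closed Schur expansion).} By Grothendieck's correspondence (Lemma~\ref{Gr}), a dessin of type $(k,l,\mu)$ weighted by $1/|{\rm Aut}_b\G|$ is the same datum as a transitive triple $(\s_0,\s_1,\s_\infty)$ in $S_d$ with $\s_0\s_1\s_\infty=1$, where $\s_\infty$ has cycle type $\mu$ with labelled cycles and $\s_0$, $\s_1$ have $k$, $l$ cycles; dropping transitivity produces the disconnected count recorded by $e^F$. The Frobenius character formula for such factorizations, combined with $\sum_\mu z_\mu^{-1}\chi^\lambda_\mu\,p_{\mu_1}p_{\mu_2}\cdots=s_\lambda(p_1,p_2,\dots)$ for the profile over $\infty$ and with the hook--content identity $\sum_{\nu\vdash d}z_\nu^{-1}\chi^\lambda_\nu\,t^{\ell(\nu)}=\prod_{(i,j)\in\lambda}\frac{t+j-i}{h(i,j)}$ for the profiles over $0$ (with $t=u$) and over $1$ (with $t=v$), yields after accounting for the labelling of poles and the automorphism weights
\begin{align}\label{schurexp}
e^F=\sum_{\lambda}s^{|\lambda|}\Bigl(\,\prod_{(i,j)\in\lambda}\frac{(u+j-i)(v+j-i)}{h(i,j)}\Bigr)\,s_\lambda(p_1,p_2,\dots)\,,
\end{align}
the sum over all partitions $\lambda$, the empty one contributing $1$. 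A quick independent check of~\eqref{schurexp} keeping everything internal to the paper: its right-hand side equals $1$ at $s=0$, and, using the standard action of $\Lambda_1$ and $M_1$ on Schur polynomials, one verifies that it solves the evolution equation~\eqref{eveq}; it then coincides with $e^F$ by the uniqueness in Corollary~\ref{evo}.

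\smallskip\noindent\emph{Step 2 (hypergeometric form and conclusion).} Put $r(x)=(u+x)(v+x)$ and let $\tilde p=(s,0,0,\dots)$; since $s_\lambda(\tilde p)=\frac{\dim\lambda}{|\lambda|!}\,s^{|\lambda|}=s^{|\lambda|}\prod_{(i,j)\in\lambda}h(i,j)^{-1}$, equation~\eqref{schurexp} becomes
\begin{align*}
e^F=\sum_{\lambda}\Bigl(\,\prod_{(i,j)\in\lambda}r(j-i)\Bigr)\,s_\lambda(p_1,p_2,\dots)\,s_\lambda(\tilde p)\,,
\end{align*}
an Orlov--Shcherbin hypergeometric tau function attached to the weight $r$ and the fixed times $\tilde p$. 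It is classical (see~\cite{MJD},~\cite{K}) that any series of this shape is a tau function of the KP hierarchy in $p_1,p_2,\dots$: the Cauchy kernel $\sum_\lambda s_\lambda(p)\,s_\lambda(\tilde p)=\exp(\sum_{k\ge1}\tfrac1k p_k\tilde p_k)$ is obtained from the vacuum by a Heisenberg group element, and the substitution $s_\lambda\mapsto(\prod_{(i,j)\in\lambda}r(j-i))\,s_\lambda$ is the exponential of a diagonal ``cut-and-join''-type operator lying in $\widehat{\mathfrak{gl}(\infty)}$, so the $\widehat{GL}(\infty)$-orbit of the vacuum is preserved. Hence $e^F$ is a KP tau function for every $s,u,v$, which is the assertion; the equations~\eqref{KP} are the first Hirota bilinear relations written through $F=\log e^F$.

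\smallskip The main obstacle is Step~1, i.e.\ establishing~\eqref{schurexp}: on the representation-theoretic side this is routine, but the labelling of poles and the exact automorphism factors $1/|{\rm Aut}_b\G|$ must be tracked carefully to produce precisely the stated coefficients, while on the differential-operator side it requires the (standard but somewhat laborious) computation of $\Lambda_1 s_\lambda$ and $M_1 s_\lambda$ in the Schur basis. After that, everything is bookkeeping plus the cited integrability. Letting all of $\tilde p$ vary would upgrade the statement to the $2$D Toda hierarchy, but only the KP claim is needed here.
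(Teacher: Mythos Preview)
Your argument is correct and takes a genuinely different route from the paper. The paper's proof is a single line from Corollary~\ref{evo}: since $p_1,\Lambda_1,M_1\in\widehat{\mathfrak{gl}(\infty)}$ (with a reference to~\cite{K}), the exponential $e^{s((u+v)\Lambda_1+M_1+uvp_1)}$ lies in the corresponding group and carries the vacuum tau function~$1$ to another tau function. You instead establish the explicit Schur expansion and recognise it as an Orlov--Shcherbin hypergeometric series. Your approach buys more: it gives the closed Schur coefficients, makes the 2D~Toda structure transparent, and is essentially the method of~\cite{GJ2} alluded to in the Remark following the corollary. The price is the extra work in Step~1.

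One point needs correcting, however: your stated reason for sidestepping the paper's argument is wrong. The operator $M_1$ \emph{does} lie in $\widehat{\mathfrak{gl}(\infty)}$. Being cubic in the bosonic generators $p_i,\partial/\partial p_i$ is no obstruction; under the boson--fermion correspondence $M_1$ becomes a normal-ordered bilinear in free fermions (a weighted one-step shift $\sum_k c_k\,{:}\psi_{k-1}\psi_k^*{:}$), exactly as the ordinary cut-and-join operator for simple Hurwitz numbers---also cubic in bosons---is quadratic in fermions and acts diagonally on Schur functions. Concretely, on the Schur basis one has $p_1\,s_\lambda=\sum_{\mu=\lambda+\square}s_\mu$, $\Lambda_1\,s_\lambda=\sum_{\mu=\lambda+\square}c(\square)\,s_\mu$, and $M_1\,s_\lambda=\sum_{\mu=\lambda+\square}c(\square)^2\,s_\mu$, so $(u+v)\Lambda_1+M_1+uvp_1$ adds a box with weight $(u+c)(v+c)$; each of these is visibly a single-shift element of $\widehat{\mathfrak{gl}(\infty)}$. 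Your own Step~2 invokes the same mechanism for the diagonal content-product operator. So the paper's short proof is sound, and your longer one is a legitimate alternative rather than a necessary repair.
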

\begin{proof}
To begin with, we notice that~$1$ is obviously a KP tau function. Then, since 
$p_1, \Lambda_1, M_1\in\widehat{\mathfrak{gl}(\infty)}$
(cf.~\cite{K}), the linear combination $s((u+v)\Lambda_1+M_1 +uvp_1)$ also belongs to $\widehat{\mathfrak{gl}(\infty)}$ for any~$s,u,v$. The exponential $e^{s((u+v)\Lambda_1+M_1+uvp_1)}$ therefore preserves the Sato Grassmannian and maps KP tau functions to KP tau functions. Thus,~$e^F$ is a KP tau function as well, and~$F$ is a solution to KP hierarchy.
\end{proof}

\begin{remark}
Corollary~\ref{tau} was earlier proven in~\cite{GJ2} by a different method.
However,~\cite{GJ2} contains no analogs of the Virasoro constraints or the evolution equation.
\end{remark}

At the end of this subsection we will sketch how to enumerate dessins with $k$ white vertices, $l$ black vertices, $d$ edges and $m$ boundary components regardless of the partition $\mu=(\mu_1,\ldots,\mu_m)$. 
To these ends, consider the specialization operator
\begin{align}
\theta: F\mapsto F|_{p_i=t, i=1,2,\ldots}\label{sp}
\end{align}
and put $f=\theta(F)$. This specialization is more subtle than the one considered in Remark \ref{ps}, and the coefficients of $f$ do not mix dessins of different genera since by Euler's formula $(k+l)-d+m=2-2g$. Expanding $f$ into a series in the variables $s$ and $t$, we recompose it as
\begin{align}
f(s,t,u,v)=\sum_{g=0}^\infty\sum_{d=2g+1}^\infty \frac{f_{g,d}(t,u,v)}{d}\,s^d\;,
\end{align}
where each coefficient $f_{g,d}(t,u,v)$ is a homogeneous polynomial in $t,u,v$ of degree $d+2-2g$ with integer coefficients. 

Furthermore, using the Virasoro constraints (\ref{V}) with $n=0,1,2$ and the homgeneity equation
$$s\frac{\partial F}{\partial s}=\sum_{i=1}^\infty ip_i\frac{\partial F}{\partial p_i}\;,$$
we can express the specializations of partial derivatives of $F$ with respect to the variables $p_1,p_2,p_3$ in terms of $s$-derivatives of $f$. More precisely, a straightforward computation yields
\begin{lemma}
We have
\begin{align*}
&\theta(F_{1})=s^2f'+suv, \\
&\theta(F_{11})=s^2(s^2f'+suv)', \\
&\theta(F_{1111})=s^2(s^2(s^2(s^2f'+suv)')')',\\
&2\theta(F_{2})=(1+s(u+v-t))(s^2f'+suv)-suv, \\
&3\theta(F_{3})=2s(u+v-t)\theta(F_{2})+(1-st)\theta(F_{1})+s\theta(F_{11})+s\theta(F_{1})^2-suv, \\
&\theta(F_{12})=s^2\theta(F_{2})', \\
&\theta(F_{13})=s^2\theta(F_{3})', \\
&2\theta(F_{22})=(1+s(u+v-t))\theta(F_{12})+3s\theta(F_{3})-2s\theta(F_{2}),
\end{align*}
where the subscript $i$ stands for the partial derivative with respect to $p_i$, and the prime $'$ denotes the derivative in $s$.
\end{lemma}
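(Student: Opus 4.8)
The plan is to feed the Virasoro constraints \eqref{vc} for $n=0,1,2$, together with the homogeneity relation $s\,\partial F/\partial s=\sum_{i\ge1}i\,p_i\,\partial F/\partial p_i$, into the specialization operator $\theta$, exploiting repeatedly that $\theta$ commutes with $\partial/\partial s$ and with multiplication by $s$ (but \emph{not} with $\partial/\partial p_i$ --- and that is exactly where the content lies). Throughout, $'$ abbreviates $\partial/\partial s$.

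First I would dispose of all the pure $p_1$-derivatives. The $n=0$ instance of \eqref{vc} reads $\tfrac1s F_1=\sum_{j\ge1}j\,p_jF_j+uv$, and by the homogeneity relation the right-hand sum is precisely $s\,\partial_s F$; hence $F_1=s^2\partial_s F+suv$ as an exact identity. Applying $\theta$ gives $\theta(F_1)=s^2f'+suv$. Differentiating the identity $F_1=s^2\partial_s F+suv$ with respect to any $p_i$ (the term $suv$ drops out) yields $F_{1w}=s^2\,\partial_s F_w$ for every nonempty multi-index $w$; taking $w=1,11,111$ and $w=2,3$ and then applying $\theta$ produces the asserted formulas for $\theta(F_{11})$, $\theta(F_{1111})$, $\theta(F_{12})$, $\theta(F_{13})$ in terms of $\theta(F_1),\theta(F_2),\theta(F_3)$.

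Next I would obtain $\theta(F_2)$ and $\theta(F_3)$ from the $n=1$ and $n=2$ instances of \eqref{vc}. The only term that is not manifestly $\theta$-friendly is $\sum_{j\ge1}(n+j)p_jF_{n+j}$; under $\theta$ it becomes $t\sum_{i\ge n+1}i\,\theta(F_i)=t\sum_{i\ge1}i\,\theta(F_i)-t\sum_{i=1}^{n}i\,\theta(F_i)$, and the full sum $t\sum_{i\ge1}i\,\theta(F_i)$ equals $sf'$ by the $\theta$-specialized homogeneity relation. Thus for $n=1$ this term contributes $sf'-t\,\theta(F_1)$ and for $n=2$ it contributes $sf'-t\,\theta(F_1)-2t\,\theta(F_2)$; substituting $sf'=\tfrac1s\theta(F_1)-uv$ and solving for $\theta(F_2)$, respectively for $\theta(F_3)$ (the $n=2$ case also involves the already known $\theta(F_{11})$ and $\theta(F_1)^2$), gives the stated expressions.

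Finally, for $\theta(F_{22})$ I would differentiate the $n=1$ constraint with respect to $p_2$, obtaining $\tfrac2sF_{22}=3F_3+\sum_{j\ge1}(j+1)p_jF_{2,j+1}+(u+v)F_{12}$, the term $3F_3$ arising from $\partial_{p_2}$ hitting the coefficient $p_j$ at $j=2$. Under $\theta$ the infinite sum collapses as before, now using the $p_2$-derivative of the homogeneity relation, $s\,\partial_s F_2=2F_2+\sum_{i\ge1}i\,p_iF_{2i}$; combined with $\theta(F_{12})=s^2\theta(F_2)'$ this yields $2\theta(F_{22})=(1+s(u+v-t))\theta(F_{12})+3s\,\theta(F_3)-2s\,\theta(F_2)$. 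I do not expect a serious obstacle: the one genuine idea is that the homogeneity relation and its $p_i$-derivatives are exactly what is needed to close up the otherwise infinite sums after specialization; everything else is bookkeeping of the boundary terms $\partial_{p_i}(p_jG)=\delta_{ij}G+p_j\,\partial_{p_i}G$ and routine algebra.
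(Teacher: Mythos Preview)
Your proposal is correct and follows precisely the route the paper indicates: it uses the Virasoro constraints~\eqref{vc} for $n=0,1,2$ together with the homogeneity relation $s\,\partial_s F=\sum_i i p_i F_i$ (and its $p_i$-derivatives), which the paper summarizes as ``a straightforward computation''. Your sketch makes explicit the one genuine observation---that specializing the homogeneity relation closes the infinite sums $\sum_j p_j(n+j)F_{n+j}$ under~$\theta$---and the remaining bookkeeping checks out line by line.
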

Applying the specialization operator $\theta$ to the first KP equation 
$$\theta(F_{22})=-\frac12\,\theta(F_{11})^2+\theta(F_{31})-\frac1{12}\,\theta(F_{1111})\,,$$
cf. (\ref{KP}), and using the above formulas, we get an ordinary differential equation for $f$ as a function of $s$ that translates into the following quadratic recursion:
\begin{align}
(d+1)f_{g,d}&=(2d-1)\,a\,f_{g,d-1}+(d-2)\,b\,f_{g,d-2}+(d-1)^2(d-2)f_{g-1,d-2}\nonumber\\
            &+\sum_{i=0}^g\sum_{j=1}^{d-3}(4+6j)(d-2-j)f_{i,j}f_{g-i,d-2-j}\;,\label{ad}
\end{align}
where $a=t+u+v,\; b=4(tu+tv+uv)-a^2.$ Starting with $f_{0,1}=tuv$, one can recursively compute the polynomials $f_{g,d}$ for all $g$ and $d$. 

Finally, let us restrict ourselves to the case of dessins with one boundary component (or bicolored polygon gluings). Denote by $h_{g,d}$ the linear term in $f_{g,d}$ with respect to $t$. Then the recursion (\ref{ad}) takes the form
\begin{align*}
(d+1)h_{g,d}&=(2d-1)(u+v)h_{g,d-1}-(d-2)(u-v)^2 h_{g,d-2}\\
            &+(d-1)^2(d-2)h_{g-1,d-2}\;,
\end{align*}
and we reproduce the well-known result of \cite{A} on the enumeration of genus $g$ gluings of a bicolored $2d$-gon with given numbers of white and black vertices (cf. also \cite{J}).

\subsection{Virasoro constraints for the numbers of ribbon graphs}
 
A closely related, but somewhat different enumerative problem was considered in~\cite{WL}. Recall that a dessin d'enfant $f^{-1}([0,1])$ is a bicolored connected ribbon graph with vertices ``colored" by either 0 or 1 depending on whether~$f$ maps the vertex to 0 or 1 in~${\C}P^1$. One can similarly try to enumerate all (not necessarily bicolored) connected ribbon graphs, and this is the problem that was addressed in~\cite{WL}. To make it precise, let us label the boundary components of a ribbon graph~$\G$ (or, equivalently, the vertices of the dual graph~$\G^*$) by integers from 1 to~$m$, and let $\mu_1,\ldots,\mu_m$ be the lengths of the boundary components of~$\G$ (or the degrees of vertices of~$\G^*$).

Ribbon graphs can naturally be represented by dessins of a special type called {\em clean dessins} in \cite{DMSS}. Namely, color each vertex of a ribbon graph in white and place black vertices at the midpoints of edges. Such a dessin corresponds to a covering of $\pl$ of even degree $d$ with ramification of type $[2^{d/2}]$ over 1 and arbitrary ramification over 0 and $\infty$.
As before, we put $k=|f^{-1}(0)|$ (the number of vertices of the ribbon graph~$\G$), $l=|f^{-1}(1)|=d/2$ (the number of edges of $\G$), and $m=|f^{-1}(\infty)|$ (the number of boundary components of~$\G$). Clearly, we have $k-d/2+m=2-2g$. 

Denote by $D_{g,m}(\mu)=D_{g,m}(\mu_1,\ldots,\mu_m)$ the number of genus~$g$ ribbon graphs with~$m$ labeled vertices of degrees $\mu_1,\ldots,\mu_m$ counted with weights $\frac{1}{|{\rm Aut}_v\,\G|}$, where the automorphisms preserve each vertex of~$\G$ pointwise. Apparently, the same numbers enumerate pure dessins with $m$ labeled boundary components of lengths $(2\mu_1,\ldots,2\mu_m)$. The following recursion for $D_{g,m}(\mu)$ was derived in~\cite{WL}, Eq.~(6):\footnote{\label{M} This is (a specialization of) Tutte's recursion for arbitrary $g$, cf.~\cite{EO2}. This formula, undeservedly forgotten, was recently reproduced in \cite{DMSS}, Eq.~(3.15). Note that the second term in the r.h.s. of~\eqref{wl}, corresponding to a loop bounding a 1-gon, was inadvertently omitted there. This required some ``modification'' of the numbers $D_{g,m}(\mu_1,\ldots,\mu_m)$ in \cite{DMSS}.}
\begin{align}
\mu_1 D_{g,m}(\mu_1,\ldots,&\mu_m)
=\sum_{j=2}^m (\mu_1+\mu_j-2)D_{g,m-1}(\mu_1+\mu_j-2,\mu_2,\ldots,\widehat{\mu}_j,\ldots,\mu_m)\nonumber\\
&{}+2(\mu_1-2)D_{g,m}(\mu_1-2,\mu_2,\ldots,\mu_m)\nonumber\\
&{}+\sum_{i+j=\mu_1-2}ij \bigg(D_{g-1,m+1}(i,j,\mu_2,\ldots,\mu_m)\nonumber\\
&{}+\sum_{g_1+g_2=g}\quad\sum_{I\sqcup J=\{2,\ldots,m\}}
D_{g_1,|I|+1}(i,\mu_I)D_{g_2,|J|+1}(j,\mu_J)\bigg)\;.\label{wl}
\end{align}
Recursion~\eqref{wl} is valid for all $g\geq 0,\;m\geq 1$, and~$\mu$ such that $d=\sum_{i=1}^m\mu_i>2$, whereas for~$d=2$ the only nonzero numbers are $D_{0,1}(2)=1/2,\;D_{0,2}(1,1)=1$.
Below we give a convenient interpretation of this recursion in terms of PDEs.

Similar to (\ref{gf}), introduce the generating function
\begin{align}\label{gfM}
\tF(s,u,p_1,p_2,\ldots)
&=\sum_{g=0}^\infty\sum_{m=1}^\infty\frac{1}{m!} \sum_{\mu\in{\Z}_+^m} D_{g,m}(\mu)s^d u^k p_{\mu_1}\ldots p_{\mu_m}\;,
\end{align}
where $d=\sum_{i=1}^m\mu_i$, $k=2-2g-m+d/2$, and $\mu=(\mu_1,\ldots,\mu_m)$ (compared to (\ref{gf}), we omit here the trivial factor $v^{d/2}$ that carries no additional information in this case).

\begin{theorem}\label{rg} The generating function $\tF$ enjoys the following integrability properties:
\begin{enumerate}[(i)]
\item Let
\begin{align*}
\tL_n=-\frac{n+2}{s^2}\frac{\partial}{\partial p_{n+2}}
+2\,u\,n\frac{\partial}{\partial p_{n}}
+\sum_{j=1}^\infty p_j(n+j)\frac{\partial}{\partial p_{n+j}}\\
{}+\sum_{i+j=n}ij\frac{\partial^2}{\partial p_i \partial p_j}+\delta_{-1,n}u p_1+\delta_{0,n}u^2\;,
\end{align*}
where $n\geq -1$. Then the partition function $e^\tF$ satisfies the infinite system of PDE's (``Virasoro constraints'')
\begin{align*}
\tL_n e^\tF=0
\end{align*}
that determine $\tF$ uniquely.

\item Put
\begin{align*}
&\Lambda_2=\sum_{i=3}^\infty (i-2)p_i\,\frac{\partial}{\partial p_{i-2}}+\frac12p_1^2\;,\nonumber\\
&M_2=\sum_{i=2}^\infty \sum_{j=1}^{i-1} \left((i-2)p_j p_{i-j}\,\frac{\partial}{\partial p_{i-2}}
+ j(i-j) p_{i+2}\,\frac{\partial^2}{\partial p_j \partial p_{i-j}}\right)\;.
\end{align*}
Then $e^\tF$ satisfies the evolution equation
\begin{align*}
\frac{1}{s}\frac{\partial e^\tF}{\partial s}=(2\,u\,\Lambda_2+M_2+u^2p_2)e^\tF\,,
\end{align*}
that, together with the initial condition~$\tF|_{s=0}=0$,
determines~$\tF$ uniquely. In other words,~$e^\tF$ is explicitly given by the formula
\begin{align*}
e^\tF=e^{\frac{s^2}{2}(2\,u\,\Lambda_2+M_2+u^2p_2)}\,1\;.
\end{align*}

\item The partition function $e^\tF$ is a tau function of the KP hierarchy, i.e. its logarithm $\tF(s,u,p_1,p_2,\ldots)$ satisfies~\eqref{KP} for any $s$ and $u$.

\end{enumerate}
\end{theorem}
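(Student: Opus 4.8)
The plan is to prove the three parts of Theorem~\ref{rg} in essentially the same way the analogous statements for $F$ were proved, exploiting the fact that the ribbon-graph count is a specialization of the dessin count: a clean dessin is a dessin $(k,l;\mu)$ in which the ramification over $1$ is of type $[2^{d/2}]$, i.e. all black vertices have degree $2$. First I would make this reduction precise. On the level of dual graphs, the numbers $D_{g,m}(\mu)$ coincide with $N_{k,l}(2\mu_1,\dots,2\mu_m)$ after one rescales the pole orders by $2$ and sets $l=d/2$; concretely, $\tF$ is obtained from $F$ by the substitution $v\mapsto$ (a variable recording $d/2$, which we drop), $p_j\mapsto 0$ for odd $j$, $p_{2j}\mapsto p_j$, together with $s^2\mapsto s$ in the appropriate normalization so that $s$ counts $\mu$-weight rather than edge-weight. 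Under such a substitution the operator $L_n$ of~\eqref{V} acting on the even $p$-variables turns into an operator on the new $p_j$'s, and a direct bookkeeping of indices shows it becomes exactly $\tL_{?}$ with the shift $n\mapsto 2n$ or $n\mapsto n$ as dictated by the $p_{n+2}$ term; the $\delta_{-1,n}up_1$ and $\delta_{0,n}u^2$ terms arise from the $uv$ term and from the lowest modes, with $v$ evaluated so as to produce $u$ (because the black vertices, all of degree one in the original dessin picture, contribute uniformly). Thus part~(i) follows from Theorem~\ref{Virasoro} by a change of variables, and uniqueness is inherited since $\tL_n e^{\tF}=0$ is again equivalent to the recursion~\eqref{wl}, which expresses all $D_{g,m}(\mu)$ in terms of $D_{0,2}(1,1)=1$.

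Alternatively, and perhaps more cleanly, I would prove part~(i) directly by the same bijective argument used for Theorem~\ref{Virasoro}, now applied to clean dessins: fix an arrowed half-edge at the first vertex of $\G^*$ (there are $2\mu_1$ choices), and contract or expand the corresponding edge. The three cases---edge joining distinct vertices, loop bounding a $1$-gon, loop splitting the first vertex---are exactly as before, except that now the ``black $1$-gon'' case is absent and the ``white $1$-gon'' case changes $\mu_1$ by $2$ rather than by $1$ (since contracting a loop at a degree-$2\mu_1$ vertex of the dual clean dessin removes the two half-edges of the loop), which accounts for the $\mu_1-2$ shifts and the factor $2$ in the second term of~\eqref{wl}, and for the $-\tfrac{n+2}{s^2}\D_{p_{n+2}}$ and $2un$ terms of $\tL_n$. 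This makes part~(i) self-contained.

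For part~(ii) I would mimic Corollary~\ref{evo}: multiply $\tL_n$ by $p_{n+2}$ and sum over $n\geq -1$. The first term gives $-\frac{1}{s^2}\sum_n (n+2)p_{n+2}\D_{p_{n+2}}=-\frac{1}{s^2}\sum_{i\geq 1} i p_i\D_{p_i}=-\frac{1}{s}\D_s$ after invoking the homogeneity equation (each edge now carries $s$-weight, with $d=\sum\mu_i$, so $s\D_s=\sum i p_i \D_{p_i}$, hence the factor $1/s$ rather than $1$); the remaining terms reassemble into $2u\Lambda_2+M_2+u^2p_2$, where the $\tfrac12 p_1^2$ inside $\Lambda_2$ comes from the $n=-1$ contribution $\delta_{-1,n}up_1$ paired with $p_{1}$ (giving $p_1^2$, with the $1/2$ from the symmetry of the sum). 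Then $\sum_n p_{n+2}\tL_n e^{\tF}=0$ is precisely the stated evolution equation, and integrating it with $\tF|_{s=0}=0$ gives the closed formula $e^{\tF}=e^{\frac{s^2}{2}(2u\Lambda_2+M_2+u^2p_2)}1$.

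For part~(iii) I would copy the proof of Corollary~\ref{tau}: the constant function $1$ is a KP tau function, and each of $p_2$, $\Lambda_2$, $M_2$ lies in $\widehat{\mathfrak{gl}(\infty)}$ (these are, up to relabeling, the same ``cut-and-join''-type operators appearing in \cite{K}, now shifting indices by $2$), so $\frac{s^2}{2}(2u\Lambda_2+M_2+u^2p_2)\in\widehat{\mathfrak{gl}(\infty)}$ and its exponential preserves the Sato Grassmannian; hence $e^{\tF}$ is a KP tau function for every $s,u$. The main obstacle I anticipate is purely clerical: getting every index shift, every combinatorial factor, and every normalization of $s$ exactly right so that the specialization of $L_n$ really lands on $\tL_n$ with the displayed $\delta$-terms---in particular verifying that the operators $\Lambda_2$, $M_2$ genuinely belong to $\widehat{\mathfrak{gl}(\infty)}$ (the $p_1^2$ and $p_{i+2}\D^2$ pieces must be the images of quadratic fermionic bilinears), which is where one must be careful rather than merely routine.
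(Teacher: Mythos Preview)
Your approach to parts~(ii) and~(iii) is exactly what the paper does: multiply $\tL_n$ by $p_{n+2}$, sum over $n\ge -1$, use the homogeneity relation $s\partial_s=\sum_i ip_i\partial_{p_i}$ to convert $-s^{-2}\sum_i ip_i\partial_{p_i}$ into $-s^{-1}\partial_s$, and then invoke $\Lambda_2,M_2,p_2\in\widehat{\mathfrak{gl}(\infty)}$ for the KP statement. No issues there.

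For part~(i), your second (``alternative'') route is the right one and is essentially what the paper does, though the paper is even terser: it simply observes that $\tL_n e^{\tF}=0$ with $\mu_1=n+2$ is a direct reformulation of the Walsh--Lehman recursion~\eqref{wl}, which it takes as established in~\cite{WL}. Your bijective sketch is the content of that recursion, but be careful with the combinatorics: in the ribbon-graph setting there is no bicoloring of faces, so the phrase ``the black $1$-gon case is absent'' is not quite the right way to put it; rather, one is working directly with the dual graph $\G^*$ with vertices of degrees $\mu_1,\dots,\mu_m$ (not $2\mu_i$), and the edge-contraction/loop-deletion bijection produces exactly the terms of~\eqref{wl}.

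Your \emph{first} route for~(i), however, has a genuine gap. The passage from dessins to clean dessins is \emph{not} a substitution of variables in $F$: the variable $v$ in~\eqref{gf} records only the \emph{number} $l$ of black vertices, not their degrees, so specializing $v$ cannot impose the constraint that every black vertex has degree~$2$. Clean dessins form a subclass of dessins cut out by a ramification condition over $1$, and $\tF$ is not obtained from $F$ by any evaluation of $u,v,s,p_i$. (Your parenthetical ``black vertices, all of degree one'' is also off; they have degree~$2$.) So drop that paragraph and keep only the direct argument via~\eqref{wl}.
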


\begin{proof}
Part (i) of the theorem is just a reformulation of the recursions (\ref{wl}) for
$\mu_1=n+2$. Note that the operators~$\tL_n$ obey the commutation relations
$[\tL_m,\tL_n]=(m-n)\tL_{m+n}$
for $m>n\geq -1$.

To prove (ii) we multiply $\tL_n$ by~$p_{n+2}$ and sum over~$n$:
\begin{align}
\sum_{n=-1}^\infty p_{n+2}\tL_n
&=\sum_{n=-1}^\infty p_{n+2}\bigg(-\frac{n+2}{s}\frac{\partial}{\partial p_{n+2}}+2\,u\,n\frac{\partial}{\partial p_{n}}\nonumber\\
&\hspace{0.6in}
{}+\sum_{j=1}^\infty p_j(n+j)\frac{\partial}{\partial p_{n+j}}
+\sum_{i+j=n}ij\frac{\partial^2}{\partial p_{i} \partial p_{j}}\bigg)+u\,p_1^2+u^2p_2\nonumber\\
&=-\frac{1}{s}\frac{\partial}{\partial s}+2\,u\,\Lambda_2+M_2+u^2 p_2\;.\label{evom}
\end{align}

Part (iii) follows from the fact that $\Lambda_2, M_2$ and $p_2$ belong to $\widehat{\mathfrak{gl}(\infty)}$,
cf. Corollary~\ref{tau}.
\end{proof}

\begin{remark}
Similar to Eq.~\eqref{qc} we can write the quantum curve equation for the wave function
$$
\widetilde{\psi}=\widetilde{\psi}(s,t,u)=e^{\tF(s,u,p_1,p_2,\ldots)}\left|_{p_i=t^i}\right.\;,
$$
that reads in this case as follows:
$$
t^2\,\frac{d^2\widetilde{\psi}}{dt^2}+\left(2(u+1)\,t-\frac{1}{s^2\,t}\right)\,\frac{d\widetilde{\psi}}{dt}+(u+u^2)\,\widetilde{\psi}=0\;.
$$
Note that this equation differs from the one obtained in \cite{MS}. The reason for that is explained in Footnote \ref{M} above. To fix that, put $Z(x,\hbar)=e^{-\frac{\log x}{\hbar}}\,\widetilde{\psi}|_{s=\hbar/x,t=1,u=1/\hbar}$. Then one has
$$\left(\hbar^2\frac{d^2}{dx^2}+\hbar\frac{d}{dx}+1\right)Z(x,\hbar)=0$$
precisely like in \cite{MS}.
\end{remark}

To complete this section, we will show that the Walsh-Lehman formula (\ref{wl}) implies the Harer-Zagier \cite{HZ} recursion for the numbers of orientable polygon gluings. We will follow the same lines as at the end of the previous subsection. To begin with, put $\tf=\theta(\tF)=\tF|_{p_i=t, i=1,2,\ldots}$, where the specialization operator $\theta$ is defined by Eq.~(\ref{sp}). 
The coefficients of $\tf$ enumerate ribbon graphs with given numbers of vertices, edges and boundary components and, therefore, do not mix graphs of different genera.
Rearrange the series $f$ as follows:
\begin{align}
\tf(s,t,u)=\sum_{g=0}^\infty\sum_{l=2g}^\infty \frac{\tf_{g,l}(t,u)}{2l}\,s^{2l}\;,
\end{align}
where $l=d/2$, and each coefficient $\tf_{g,l}(t,u)$ is a homogeneous polynomial in $t,u$ of degree $l+2-2g$ with integer coefficients. 
Like in the case of dessins, using the Virasoro constraints of Theorem \ref{rg} (i) with $n=-1,0,1$, we can express the specializations of partial derivatives of $\tF$ with respect to the variables $p_1,p_2,p_3$ in terms of $s$-derivatives of $\tf$. A straightforward computation yields
\begin{lemma}
We have
\begin{align*}
&\theta(\tF_{1})=s^3\tf'+s^2tu, \\
&\theta(\tF_{11})=s^3\theta(\tF_{1})'-s^2\theta(\tF_{1})+s^2u, \\
&\theta(\tF_{111})=s^3\theta(\tF_{11})'-2s^2\theta(\tF_{11}),\\
&\theta(\tF_{1111})=s^3\theta(\tF_{111})'-3s^2\theta(\tF_{111}),\\
&2\theta(\tF_{2})=s^3\tf'+s^2u^2, \\
&4\theta(\tF_{22})=s^6\tf''+3s^5\tf'+2s^4u^2, \\
&\theta(\tF_{13})=s^5((2s^2u+1-s^2t)(s\tf')'+(2s^2u+2-s^2t)\tf')+s^6(2tu^2-t^2u)+3s^4u^2, 
\end{align*}
where, as before, the subscript $i$ stands for the partial derivative with respect to $p_i$, and the prime $'$ denotes the derivative in $s$.
\end{lemma}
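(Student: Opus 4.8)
The plan is to extract all eight identities from the Virasoro constraints $\tL_n e^{\tF}=0$ of Theorem~\ref{rg}(i) with $n=-1,0,1$ (the ones whose ``pivot'' term is $-\frac{n+2}{s^2}\frac{\partial}{\partial p_{n+2}}$, with $n+2\in\{1,2,3\}$) together with the homogeneity identity
\[
s\,\frac{\partial\tF}{\partial s}=\sum_{i\ge1}i\,p_i\,\frac{\partial\tF}{\partial p_i}=:E\tF,
\]
valid because in \eqref{gfM} the $s$-degree of each monomial equals its weighted $p$-degree $\sum\mu_i$. Two structural facts keep the computation short. First, for $n=-1,0,1$ the quadratic term $\sum_{i+j=n}ij\,\partial_{p_i}\partial_{p_j}$ is empty (it needs $i,j\ge1$), so all relations we use are linear in $\tF$. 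Second, the specialization $\theta$ of \eqref{sp} turns index-shift operators into the Euler operator: $\theta(\Lambda_1 G)=\theta(E\,G)$ for every $G$, where $\Lambda_1=\sum_{i\ge2}(i-1)p_i\,\partial_{p_{i-1}}$ is the operator of Corollary~\ref{evo}, and likewise $\theta\bigl(\sum_{j\ge1}(j+1)p_j\,\partial_{p_{j+1}}G\bigr)=\theta(E\,G)-t\,\theta(\partial_{p_1}G)$; combined with homogeneity this converts every such operator acting on $\tF$ (and on its $p_1,p_2$-derivatives) into $s\,\partial_s$, up to explicit lower-order pieces.

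Concretely, I would first rewrite $\tL_n\tF=0$ for $n=-1,0,1$ as
\begin{align*}
\frac{1}{s^2}\tF_1&=\Lambda_1\tF+u\,p_1,\qquad \frac{2}{s^2}\tF_2=E\tF+u^2,\\
\frac{3}{s^2}\tF_3&=2u\,\tF_1+\sum_{j\ge1}(j+1)\,p_j\,\tF_{j+1},
\end{align*}
and apply $\theta$ to the first two: since $\theta(\Lambda_1\tF)=\theta(E\tF)=s\,\tf'$, this gives immediately $\theta(\tF_1)=s^3\tf'+s^2tu$ and $2\theta(\tF_2)=s^3\tf'+s^2u^2$.

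Next I would differentiate these relations once or twice with respect to $p_1$ (for the chain $\tF_{11},\tF_{111},\tF_{1111}$ and for $\tF_{13}$) or once with respect to $p_2$ (for $\tF_{22}$), \emph{before} specializing, using the elementary commutators $[\partial_{p_1},\Lambda_1]=0$, $[E,\partial_{p_1}]=-\partial_{p_1}$, $[E,\partial_{p_2}]=-2\partial_{p_2}$ and $\bigl[\partial_{p_1},\sum_{j\ge1}(j+1)p_j\,\partial_{p_{j+1}}\bigr]=2\partial_{p_2}$. For example, $\partial_{p_1}$ of the $n=-1$ relation gives $\frac{1}{s^2}\tF_{11}=\Lambda_1\tF_1+u$; since homogeneity yields $E\tF_1=\partial_{p_1}(E\tF)-\tF_1=s\,\partial_s\tF_1-\tF_1$ and $\theta\circ\Lambda_1=\theta\circ E$, applying $\theta$ produces $\theta(\tF_{11})=s^3\theta(\tF_1)'-s^2\theta(\tF_1)+s^2u$; iterating, with $E\tF_{1^{k-1}}=s\,\partial_s\tF_{1^{k-1}}-(k-1)\tF_{1^{k-1}}$, gives $\theta(\tF_{111})=s^3\theta(\tF_{11})'-2s^2\theta(\tF_{11})$ and $\theta(\tF_{1111})=s^3\theta(\tF_{111})'-3s^2\theta(\tF_{111})$. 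The same step with $\partial_{p_2}$ on the $n=0$ relation gives, after the two $2\tF_2$ terms cancel, $2\theta(\tF_{22})=s^3\theta(\tF_2)'$, hence $4\theta(\tF_{22})=s^6\tf''+3s^5\tf'+2s^4u^2$ upon inserting $\theta(\tF_2)$; and $\partial_{p_1}$ on the $n=1$ relation (where the commutator contributes an extra constant $2\tF_2$) expresses $\theta(\tF_{13})$ linearly through $\theta(\tF_1),\theta(\tF_2),\theta(\tF_{11})$ and their $s$-derivatives. Substituting the formulas from the previous step and collecting terms yields the remaining closed forms.

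I do not expect a genuine obstacle: the paper's phrase ``a straightforward computation'' is accurate, and the ``hard part'' is just disciplined bookkeeping, concentrated in the $\theta(\tF_{13})$ identity. There one must correctly absorb the constant $2\tF_2$ produced by $[\partial_{p_1},\sum_j(j+1)p_j\partial_{p_{j+1}}]=2\partial_{p_2}$, carry along the $\delta$-terms $u p_1$ and $u^2$ coming from $\tL_{-1}$ and $\tL_0$, and then push the (by now lengthy) polynomial coefficients in $t,u$ through the substitution --- cross terms such as $s^6 t^2 u$ arise only at that stage, from products of specialized coefficients of $\theta(\tF_1)$ and $\theta(\tF_{11})$. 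As a consistency check I would note that the whole computation runs in lockstep with the analogous one for $F$ in the preceding subsection (with $-\frac{n+1}{s}\partial_{p_{n+1}}$ there playing the role of $-\frac{n+2}{s^2}\partial_{p_{n+2}}$ here), and that feeding the output into the first equation of \eqref{KP} must reproduce the Walsh--Lehman recursion \eqref{wl}, and ultimately the Harer--Zagier recursion, after $\tf$ is reorganized into the polynomials $\tf_{g,l}$.
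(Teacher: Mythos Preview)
Your proposal is correct and follows exactly the approach the paper indicates: use the Virasoro constraints $\tL_{-1},\tL_0,\tL_1$ (where the quadratic term vanishes) together with the homogeneity relation $s\,\partial_s\tF=E\tF$, differentiate in $p_1$ or $p_2$ before specializing, and exploit $\theta\circ\Lambda_1=\theta\circ E$ to convert everything into $s$-derivatives of $\tf$. One tangential slip in your closing remark: plugging the formulas into the first KP equation yields the quadratic recursion~\eqref{hz} for the polynomials $\tf_{g,l}$ (and then Harer--Zagier), not the Walsh--Lehman recursion~\eqref{wl}, which is the Virasoro constraint itself.
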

Applying the specialization operator $\theta$ to the first KP equation (\ref{KP}) 
and using the above formulas, we get an ordinary differential equation for $\tf$ as a function of $s$ that translates into the following quadratic recursion:
\begin{align}
(l+1)\tf_{g,l}&=2(2l-1)(t+u)\tf_{g,l-1}+(2l-1)(2l-3)(l-1)\tf_{g-1,l-2}\nonumber\\
              &+3\sum_{i=0}^g\sum_{j=0}^{l-2}(2j+1)(2(l-2-j)+1)\tf_{i,j}\tf_{g-i,l-2-j}\;,\label{hz}
\end{align}	
where we put by definition $\tf_{0,0}=u$.	This is essentially the formula from \cite{CC}, but derived in a more straightforward way. Note that starting with $\tf_{0,1}=t^2u+tu^2$, one can recursively compute the polynomials $\tf_{g,l}$ for all $g$ and $l$. 

To enumerate genus $g$ ribbon graphs with one boundary component (or $2l$-gon gluings), it is sufficient to consider the linear terms $\epsilon_{g,l}$ in $\tf_{g,l}$ with respect to $t$. Then Eq.~(\ref{hz}) turns into the famous Harer-Zagier recursion
\begin{align*}
(l+1)\epsilon_{g,l}=2(2l-1)u\epsilon_{g,l-1}+(2l-1)(2l-3)(l-1)\epsilon_{g-1,l-2}\;,
\end{align*}
cf. \cite{HZ}.

\section{Topological recursion}

The generating function $F=\sum_{g,m}F_{g,m}$ of \eqref{gf} enumerating Belyi pairs~$(C,f)$ (or Grothendieck's dessins) can be naturally decomposed into components with fixed~$g$ and $m$, where $g$ is the genus of~$C$ and~$m$ is the number of poles of~$f$:
\begin{align}
F_{g,m}(s,u,v,p_1,p_2,\dots)
= \frac{1}{m!}\sum_{\mu_1,\dots,\mu_m}\sum_{k+l=d-m+2-2g}
   N_{k,l}(\mu) s^{d} u^k v^l\, p_{\mu_1}\ldots p_{\mu_m}\label{gm}
	\end{align}
(here, as usual, $d=\sum\mu_i$). Another way to collect these numbers into a generating series is to use the
\emph{$m$-point correlation functions}
$$W_{g,m}(x_1,\dots,x_m)
 =\sum_{\mu_1,\dots,\mu_m}\sum_{k+l=d-m+2-2g} N_{k,l}(\mu)\,x_1^{\mu_1}\dots x_m^{\mu_m}.$$

\emph{Topological recursion} (cf. \cite{EO2}) is an ``ansatz'' that allows to reconstruct the coefficients of certain generating series recursively in~$g$ and~$m$. Traditionally, it is formulated in terms of correlation functions or, rather, 
\emph{differentials}

\begin{align*}
w_{g,m}(x_1,\dots,x_m)&=\frac{\D^mW_{g,m}}{\D x_1\ldots \D x_m} \,dx_1\ldots dx_m\\
&=\sum_{\mu}\sum_{\substack{k,\,l\\k+l=d-m+2-2g}} N_{k,l}(\mu)\prod_{i=1}^m \mu_i x_i^{\mu_i-1} dx_i.
\end{align*}

We will present the topological recursion in terms of components~$F_{g,m}$ of the generating function $F$. The advantage of this approach is that we need only one set of variables~$p_i$ for all~$g$ and~$n$. The two approaches being equivalent, it proves out, however, that many properties of the recursion become more clear in terms of~$p$-variables. 

One of the nice features of topological recursion is that the generating functions~$F_{g,m}$ become polynomilas under a linear change of variables $p_i$. The components~$F_{g,m}$ of the total generating function $F$ are infinite formal series in $p_i$, and their polynomiality is far from being an immediate consequence of the Virasoro constraints for $F$. On the other hand, this polynomiality automatically follows from the equations of topological recursion. Another advantage of topological recursion is its universality. For a variety of enumerative problems it takes the same form, differing only in initial conditions. 

Introduce the formal variables~$x$ and~$z$ related by
\begin{equation}
z(x)=\sqrt{\frac{1-\beta  s  x}{1-\alpha  s  x}},\qquad
  x(z)=\frac{z^2-1}{s  \left(\alpha  z^2-\beta \right)},
\label{eqzx}\end{equation}
where
\begin{equation}
\a=(\sqrt{u}-\sqrt{v})^2,\qquad
 \b=(\sqrt{u}+\sqrt{v})^2.
\label{eqabs}
\end{equation}
We consider~\eqref{eqzx} as a formal change of coordinates on the complex projective line $\pl$ near the point $x=0$ (resp., $z=1$) 
depending on the parameters $\a,\b,s$ (or $u,v,s$).

Put
\begin{equation}
T_j(p)=\sum_{i=1}^\infty c_j^{(i)}p_i\,,
\label{eqT}\end{equation}
where the coefficients $c_j^{(i)}$ are defined by the relation
$$\sum_{i=1}^\infty c_j^{(i)}x^i=(z(x))^{j}-1\,.$$

\begin{theorem}\label{th2}
Let $F_{g,m}$ be the infinite series defined by~\eqref{gm}. Then
\begin{enumerate}[(i)]
\item For each~$g,m$ with~$2g-2+m>0$ there exists a polynomial~$G_{g,m}$ of the variables~$t_j$, $j\in\Z_\odd$, such that
$$F_{g,m}(p)=G_{g,m}(t)\bigm|_{t_j=T_j(p)}$$
(i.e. each~$F_{g,m}$ is a polynomial in the linear functions $T_{\pm1},T_{\pm3},T_{\pm5},\dots$).
\item The polynomials~$G_{g,m}$ can be recursively computed starting from $G_{0,3}$ and $G_{1,1}$, cf. Eqs.~\eqref{eq1}--\eqref{eq4} below.
\end{enumerate}
\end{theorem}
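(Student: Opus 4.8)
The plan is to derive Theorem~\ref{th2} from the Virasoro constraints of Theorem~\ref{Virasoro}, essentially by rewriting the recursion~\eqref{vt} in the ``spectral curve'' coordinate~$z$ and recognizing the result as Eynard--Orantin topological recursion. First I would set up the change of variables: the pair~\eqref{eqzx} is the rational parametrization of the spectral curve, so I would express the one-point function~$W_{0,1}$ (equivalently the disk generating function, whose $p$-analogue is governed by~$F_{0,1}$) and the two-point function~$W_{0,2}$ in the~$z$-coordinate, identify the ramification point(s) of~$x(z)$ (the zeros of~$dx$, which by~\eqref{eqzx} occur at $z=0$ and $z=\infty$ after accounting for symmetry $z\mapsto -z$), and write down the recursion kernel $K(z_0,z)$ built from $y\,dx$ and $\omega_{0,2}$ in the usual way. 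The linear forms $T_j(p)$ of~\eqref{eqT} are designed precisely so that, after the substitution $p_i\mapsto$ (the relevant powers of $x(z)$), the monomials $T_j$ become the basis $z^j$; thus the ``polynomiality in the $T_j$'' statement~(i) is the $p$-side translation of the statement that the $w_{g,m}$ are polynomials in the local coordinates $1/(z_i\mp1)$ (or in $z_i$ in an appropriate chart) times $dz_i$, which is the standard output of topological recursion on a rational curve.

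The main work is step~(ii): showing the $G_{g,m}$ satisfy the Eynard--Orantin recursion. My approach would be to take the Virasoro relation~\eqref{vc}, multiply by a suitable generating kernel in an auxiliary variable and sum over~$n$, converting the $p_n$-derivatives into the correlation differentials $w_{g,m}$; the three structural pieces of~\eqref{vc} — the term $\sum_j p_j(n+j)\partial_{p_{n+j}} + (u+v)n\,\partial_{p_n}$, the ``splitting'' term $\sum_{i+j=n} ij(\partial^2 + \partial\partial)$, and the source $\delta_{0,n}uv$ — should assemble, under the coordinate change~\eqref{eqzx}, into respectively: the action of the linearized curve (the denominator of the recursion kernel), the quadratic $w_{g-1,m+1} + \sum w\,w$ term, and the data of $y\,dx$. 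Concretely I would: (a) verify that the specialization encoded by $c_j^{(i)}$ diagonalizes the linear operator $\sum_j p_j(n+j)\partial_{p_{n+j}}+(u+v)n\partial_{p_n}$, i.e. that it acts on each $T_j$ by a simple rational factor in the $z$-variable; (b) check that $\sum_{i+j=n} ij$ in the $z$-variable produces exactly the residue-at-ramification structure $\operatorname{Res}_{z\to 0}$ (and $z\to\infty$) that defines the kernel; (c) extract the base cases — the recursion starts from $G_{0,3}$ and $G_{1,1}$ rather than $G_{0,1},G_{0,2}$ because, as usual, $w_{0,1}=y\,dx$ and $w_{0,2}=B$ are the ``input'' of the recursion and the first genuinely recursive outputs are $(0,3)$ and $(1,1)$, which here must be computed directly from~\eqref{vt} or from the already-established evolution equation~\eqref{eveq}. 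Displaying~\eqref{eq1}--\eqref{eq4} amounts to writing this assembled recursion out in the $t_j$-variables.

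I expect the principal obstacle to be the bookkeeping in step~(b): matching the combinatorial ``cut'' in~\eqref{vt} (the term $\sum_{i+j=\mu_1-1} ij\,N_{k,l}(i,j,\dots) + \sum ij\,N\,N$) with the analytic residue in the Eynard--Orantin kernel requires a careful Laplace/residue identity relating $\sum_{i+j=n} ij\,x^i y^j$-type sums to the expansion of $1/(z^2-w^2)$-type kernels near the ramification points, including getting the $z\mapsto -z$ involution and its fixed points right and handling the contribution at $z=\infty$ on an equal footing with $z=0$. A secondary subtlety is proving polynomiality~(i) independently is \emph{not} needed — it falls out once~(ii) is established, since the recursion manifestly preserves polynomiality in the $T_j$ and the base cases $G_{0,3}, G_{1,1}$ are polynomial; but one must still check there are no poles at the ramification points, i.e. that the residues pick up only the polynomial (principal) part, which is the content of the linear loop equation and follows from the $n=0,1$ Virasoro constraints (or, as in Remark~\ref{ps}, from the quantum curve~\eqref{qc}). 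Finally, I would note that the analogous statement for $\tF$ (ribbon graphs, Theorem~\ref{rg}) is obtained by the same argument using $\tL_n$ in place of $L_n$, recovering the topological recursion of~\cite{EO2} with a more transparent derivation.
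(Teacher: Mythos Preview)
Your proposal is correct and follows essentially the same route as the paper: sum the Virasoro constraints~\eqref{vc} against $x^n$ to obtain a single ``master'' equation, extract the $(0,1)$ and $(0,2)$ pieces to identify the spectral curve~\eqref{eqzx} and the Bergman kernel, then rewrite the remaining stable part in the $z$-coordinate as a rational recursion for $U_{g,m}=\d_z G_{g,m}\,dz$, from which polynomiality in the $T_j$ follows by induction and the residue form~\eqref{eq2} is obtained by applying the projector $P_L$. The only cosmetic difference is that the paper carries this out in the $U_{g,m}/P_L$ language rather than the $w_{g,m}$/Eynard--Orantin kernel language you sketch, and handles your anticipated obstacle (b) not via a Laplace identity but by rewriting $\sum_j p_j(n+j)\partial_{p_{n+j}}$ as $\d_y^{-1}d_y\bigl(\frac{x^2\d_xF-y^2\d_yF}{x-y}\bigr)$ and then checking directly that its expansion at $z=0,\infty$ reproduces $U_{0,2}$.
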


Let us now formulate the recursion for the polynomials~$G_{g,m}$ precisely. This can be done in terms of the so-called \emph{spectral curve}. In our case the spectral curve is the projective line~$\pl$ equipped with the globally defined holomorphic involution $z\mapsto -z$ with respect to some affine coordinate~$z$.

By a {\em Laurent form} we understand here a globally defined meromorphic 1-form on $\pl$ with poles only at~$0$ and~$\infty$. Denote by~$L$ the space of odd Laurent forms relative to the involution $z\mapsto -z$. The forms $d(z^j)=j\,z^{j-1}\,dz$, $j\in\Z_\odd$, provide a convenient basis in~$L$. Let~$P_L$ denote the projector to the space~$L$ in the space of all Laurent forms. For a Laurent form~$\phi$ its projection $\psi=P_L(\phi)$ to $L$ is uniquely determined by the requirement that the form 
$\psi-\phi_\odd$ is regular at both~$0$ and~$\infty$, where $\phi_\odd(z)=\frac12(\phi(z)-\phi(-z))$ is the odd part 
of~$\phi$. More explicitly, the action of~$P_L$ is given by the formula
\begin{align}
(P_L\phi)(z)&=\sum_{i=0}^\infty\res_{w=0}(\phi(w)w^{2i+1})\;z^{-2i-2}dz-
\sum_{i=0}^\infty\res_{w=\infty}(\phi(w)w^{-2i-1})\;z^{2i}dz\nonumber\\
&=\res_{w=0}\left(\phi(w)\frac{w\;dz}{z^2-w^2}\right)+\res_{w=\infty}\left(\phi(w)\frac{w\;dz}{z^2-w^2}\right)\;.
\label{res}\end{align}

Note that for the validity of this definition it will suffice to assume that~$\phi$ is defined in a neighborhood of the 
points~$0$ and~$\infty$, or even that~$\phi$ is  a formal Laurent series at these points. On the other hand, 
the form~$P_L\phi\in L$ is always globally defined on~$\pl$.

In fact, the recursion applies not to the polynomials~$G_{g,m}$ themselves, but to certain $1$-forms~$U_{g,m}$. For the set of variables $z$ and $t=(t_{\pm 1},t_{\pm 3},\ldots)$ introduce the differential operator
$$\d_{z,t}=\sum_{j\in\Z_\odd}jz^{j-1}\pd{}{t_j}\,.$$
For $2g-2+m>0$ put
\begin{align*}
U_{g,m}(z)&=\d_{z,t} G_{g,m}(z)=\sum_{j\in\Z_\odd}jz^{j-1}\pd{G_{g,m}}{t_j}\,.
\end{align*}
As we will see later, $U_{g,m}=U_{g,m}(z)dz$ is an odd Laurent form on~$\pl$ that is polynomial in~$t_j$.

\begin{remark}\label{remdelta}
The operator~$\d_{z,t}$ written in terms of $x$ and $p$-variables becomes
$$\d_{x,p}=\frac{dz}{dx}\sum_{i=1}^\infty ix^{i-1}\,\pd{}{p_i}\,,$$
where $x$ is related to $z$ by (\ref{eqzx}) and $t_j=T_j(p)$, see~\eqref{eqT}. 
(For brevity we will omit the subscripts `$p$' and `$t$' by
$\d$, always associating $p$- and $t$-variables with $x$ and $z$ respectively.)
More precisely, assume that a function~$f$ of $p$-variables can be expressed as a composition  $f=h\circ T$, where $h$ is a function of $t$-variables and~$T$ is the linear change~\eqref{eqT}. Then we have $\d_xf\,dx=(\d_zh)|_{t_k=T_k(p)}\,dz$.
In particular, if~$g$ is a polynomial in $t$-variables, then~$\d_xfdx$ is a Laurent form in~$z$ (with coefficients depending on $p_i$'s). Indeed, the operators on both sides satisfy the Leibnitz rule, and therefore it is sufficient to prove the equality for the case~$h=t_k$, that is,
$$\d_x T_k(p)dx=k z^{k-1} dz,\qquad z=z(x)\,,$$
which is essentially the definition of the linear functions~$T_k(p)$, cf.~(\ref{eqT}).
\end{remark}

In the unstable cases (i.e. when $2g-2+m\leq 0$) the definition of~$U_{g,m}$ should be modified. Namely, we set $U_{0,1}=0$ and 
define~$U_{0,2}$ by the following formal expansions
\begin{equation}
\begin{aligned}
U_{0,2}(z)dz&=-\sum_{i=0}^\infty t_{-2i-1}z^{2i}dz=-(t_{-1}+t_{-3}z^2+\dots)\,dz,\quad z\to 0,\\
U_{0,2}(z)dz&=-\sum_{i=0}^\infty t_{2i+1}z^{-2i}d(z^{-1})=(t_1z^{-2}+t_3z^{-4}+\dots)\,dz,\quad z\to\infty,\\
\end{aligned}
\label{U02}
\end{equation}

In general, the homogeneous degree~$m$ polynomial~$G_{g,m}$ can be recovered form the form~$U_{g,m}$ by the Euler formula
\begin{equation}
G_{g,m}=\frac{1}{m}\sum t_k\pd{G_{g,m}}{t_k}=\frac1m\,\O(U_{g,m},U_{0,2})\;,
\label{eq1}\end{equation}
where for odd forms~$\phi$ and~$\psi$ we set
$$\O(\phi,\psi)=-\O(\psi,\phi)=\res_{z=0}\left(\phi\int\psi\right)+\res_{z=\infty}\left(\phi\int\psi\right)\;.$$

The last ingredient needed to write down the topological recursion is the form
\begin{equation}\label{eta}
\eta=\eta(z)dz=\frac{\s\,z^2dz}{(z^2-1)^2 (\a\,z^2 -\b)},\qquad
\s=(\a-\b)^2=16\,u\,v.
\end{equation}
This form is odd and has the property that the dual vector field
$$\frac{1}{\eta}=\frac{1}{\eta(z)}\,\frac{d}{dz}=\frac{\a\,z^4-(2\,\a+\b)\,z^2+\a +2\,\b -\b\,z^{-2}}{\s}\,\frac{d}{dz}$$
is meromorphic with poles of order~$2$ at~$z=0$ and $z=\infty$ and regular elsewhere in~$\pl$.

The main recursive relation of this paper is
\begin{equation}
U_{g,m}=P_L\Biggl(\frac{1}{2\eta}\Biggl(\d U_{g-1,m+1}+\sum_{\substack{g_1+g_2=g,\\m_1+m_2=m+1}}U_{g_1,m_1}U_{g_2,m_2}\Biggr)\Biggr)
\label{eq2}\end{equation}
(here and below we tacitly assume that $\d f=\d_z f\,dz$).
Note that almost all terms in the sum on the right hand side of (\ref{eq2}) belong to~$L$, so that~$P_L$ is identical on these terms. Therefore, (\ref{eq2}) can equivalently be rewritten as
$$
U_{g,m}=\frac{1}{2\eta}\Biggl(\d U_{g-1,m+1}+\mathop{\sum{}^*}_{\substack{g_1+g_2=g,\\m_1+m_2=m+1}} 
U_{g_1,m_1}U_{g_2,m_2}\Biggr)+P_L\left(\frac1\eta\,U_{g,m-1}U_{0,2}\right)\;,
$$
where the star $^*$ by the summation sign means that the terms involving~$U_{0,2}$ are excluded (recall that $U_{0,1}=0$ by assumption). This recursion relation is valid for all~$g$ and~$n$ with~$2g-2+m>1$. Moreover, it applies for~$(g,m)=(0,3)$ as well:
\begin{equation}
U_{0,3}=P_L\left(\frac{U_{0,2}^2}{2\eta}\right).
\label{eq3}\end{equation}
In the case $(g,m)=(1,1)$ the formula is not applicable since~$\d U_{0,2}$ is not defined. This is why we set by definition
\begin{equation}
U_{1,1}=U_{1,1}(z)\,dz=P_L\left(\frac{1}{2\eta}\left(\frac{dz}{2z}\right)^2\right)=\frac{1}{2\eta}\left(\frac{dz}{2z}\right)^2
=\frac{1}{8z^2\eta(z)}\,dz\,.
\label{eq4}\end{equation}

Eqs.~\eqref{eq1}--\eqref{eq4} concretize the second statement of Theorem~\ref{th2}.

Below we list the polynomials $U_{g,m}$ and $G_{g,m}$ for small~$g$ and~$m$:

\begin{align*}
\s\,U_{0,3}(z)&=\frac{\a}{2}\,t_1^2-\frac{\b}{2}\,t_{-1}^2\,z^{-2},\\[-12pt]\\
\s\,G_{0,3}&=\frac{\a}{6}\,t_1^3+\frac{\b}{6}\,t_{-1}^3,\\[-12pt]\\
\s\,U_{1,1}(z)&=\frac{\a}{8}\,z^2 -\frac{2\a+\b}{8}
  +\frac{\a+2\b}{8}\,z^{-2}-\frac{\b}{8}\,z^{-4},\\[-12pt]\\
\s\,G_{1,1}&=\frac{\a}{24}\,t_3-\frac{2\a+\b}{8}\,t_1-\frac{\a+2\b}{8}\,t_{-1}+\frac{\b}{24}\,t_{-3},\\[-12pt]\\
\s^2\,U_{0,4}(z)&=\frac{\a^2}{2}t_1^3\,z^2
 +\Bigl(\frac{\a^2}{2}\,t_3\, t_1^2-\frac{\a(2\a+\b)}{2}\,t_1^3-\frac{\a\b}{2}\,t_{-1}^2\, t_1\Bigr)\\
 &+\Bigl(\frac{\b(\a+2\b)}{2}\,t_{-1}^3+\frac{\a\b}{2}\,t_1^2\, t_{-1}-\frac{\b^2}{2}\,t_{-3}\, t_{-1}^2\Bigr)\,z^{-2}
 -\frac{\b^2}{2}\,t_{-1}^3\,z^{-4},\\[-12pt]\\
\s^2\,G_{0,4}&=\frac{\a^2}{6}\,t_1^3\, t_3-\frac{\a(2\a+\b)}{8}\,t_1^4-\frac{\a\b}{4}\,t_1^2\, t_{-1}^2
     -\frac{\b(\a+2\b)}{8}\,t_{-1}^4+\frac{\b^2}{6}\,t_{-3}\,t_{-1}^3,\\[-12pt]\\
\s^2\,U_{1,2}(z)&=\frac{5\a^2}{8}\,t_1\,z^4+\Bigl(\frac{\a^2}{8}\,t_3-\frac{3\a(2\a+\b)}{4}\,t_1\Bigr)\,z^2\\
&+\Bigl(\frac{10\a^2+16\a\b+\b^2}{8}\,t_1+\frac{\a^2}{8}\,t_5+\frac{\a\b}{2}\,t_{-1}-\frac{\a(2\a+\b)}{4}\,t_3\Bigr)\\
&+\Bigl(-\frac{\a^2+16\a\b+10\b^21}{8}\,t_{-1}+\frac{\b(\a+2\b)}{4}\,t_{-3}-\frac{\a\b}{2}\,t_1-\frac{\b^2}{8}\,t_{-5}\Bigr)\,z^{-2}\\
&+\Bigl(\frac{3\b(\a+2\b)}{4}\,t_{-1}-\frac{\b^2}{8}\,t_{-3}\Bigr)\,z^{-4} 
  -\frac{5\b^2}{8}\,t_{-1}\,z^{-6},\\[-12pt]\\
\s^2\,G_{1,2}&=\frac{\a^2}{8}\,t_1\,t_5+\frac{\a^2}{48}\,t_3^2-\frac{\a(2\a+\b)}{4}\,t_1\,t_3+\frac{\a^2+16\a\b+10\b^2}{16}\,t_{-1}^2\\
&+\frac{\a\b}{2}\,t_{-1}\,t_1+\frac{10\a^2+16\a\b+\b^2}{16}\,t_1^2-\frac{\b(\a+2\b)}{4}\,t_{-3}\,t_{-1}\\
&+\frac{\b^2}{48}\,t_{-3}^2+\frac{\b^2}{8}\,t_{-5}\,t_{-1}\;,
\end{align*}
where $\a,\b,\s$ are given by~\eqref{eqabs},~\eqref{eta}. This list can be continued further on.

\begin{remark}
Here we compare our form of the topological recursion~\eqref{eq2} with the one that can be found in the literature, see, e.g.~\cite{EO1}, \cite{EO2}.
\begin{enumerate}[(i)]
\item Traditionally, the spectral curve comes with an embedding to (a compactification of)~$\C^2$ by means of certain meromorphic functions~$X,Y$ on~$\pl$. These functions are chosen so that~$X$ is even with respect to the involution $z\mapsto -z$, and~$\eta=Y\,dX$. In our case we could have set, for example,
$$X=\frac1x=s\,\frac{\a z^2-\b}{z^2-1},\qquad Y=-\frac{\a-\b}{2s}\,\frac{z}{\alpha z^2-\beta}.$$
The formulas of the topological recursion, however, involve the coordinates~$X$ and~$Y$ only in the combination~$\eta=Y\,dX$.
\item The topological recursion is usually formulated in terms of $m$-point correlators. They are related to the homogeneous components~$F_{g,m}$ of the generating function~$F$ by the formulas
\begin{align}
w_{g,m}(x_1,\dots,x_m)&=\d_{x_1}\ldots\d_{x_m} F_{g,m}\,dx_1\ldots dx_m\nonumber\\
&=\d_{x_2}\ldots\d_{x_m} U_{g,m}(x_1)\,dx_1\ldots dx_m\;,
\label{eqwgn}
\end{align}
where $\d_{x_j}=\sum_{i=1}^\infty i\,x_j^{i-1}\,\pd{}{p_i}$. Via the change of variables~\eqref{eqzx}, $w_{g,m}$ can be viewed as a meromorphic $m$-differential on $\left(\pl\right)^m$ that is a Laurent form with respect to each of its arguments provided~$2g-2+m>0$.
\item A version of~\eqref{eqwgn} holds also for~$(g,m)=(0,2)$. Namely, $w_{0,2}(z_1,z_2)=\d_{z_2}U_{0,2}\,dz_2$ is the odd part of the \emph{Bergman kernel} $B(z_1,z_2)=\frac{dz_1\,dz_2}{(z_1-z_2)^2}$:
$$w_{0,2}(z_1,z_2)=\d_{z_2}U_{0,2}\,dz_2=\frac12\left(B(z_1,z_2)-B(z_1,-z_2)\right).$$
This can be interpreted as an equality of asymptotic expansions of the left and right hand sides at $z_1\to 0$ and~$z_1\to\infty$,
cf.~\eqref{U02}: 
$$w_{0,2}(z_1,z_2)=
\begin{cases}-\sum_{i=0}^\infty d(z_2^{-2i-1})\;z_1^{2i}dz_1,&z_1\to 0\;,\medskip\\ 
-\sum_{i=0}^\infty d(z_2^{2i+1})\;z_1^{-2i}d(z_1^{-1}),& z_1\to\infty\;.\end{cases}$$
\item The projector~$P_L$, see \eqref{res}, is given by the contour integral
\begin{align*}
(P_L\phi)(z)\,dz=\frac{1}{2\pi\sqrt{-1}}\left(\int_{|w|=\epsilon}\frac{\phi(w)\,w\,dw}{z^2-w^2}
+\int_{|w|=1/\epsilon}\frac{\phi(w)\,w\,dw}{z^2-w^2}\right)\,dz\;,
\end{align*}
for small $\epsilon > 0$, where
$$\frac{w\;dz}{z^2-w^2}=\frac12\int\limits_{-w}^w B(z,\cdot).$$
This explains the appearance of the Bergman kernel in the majority of expositions of the topological recursion.
\item The above items (i)--(iv) demonstrate that the traditional form of the topological recursion in terms of the correlators~$w_{g,n}$ is obtainable from~\eqref{eq2} by applying $\d_{z_2}\ldots\d_{z_n}$ to the both sides of it.
\end{enumerate}
\end{remark}

\section{Proof of the topological recursion}

\subsection{Master Virasoro equation}

As we will see below, the topological recursion relations are just the equivalently reformulated Virasoro constraints.
To begin with, let us collect the Virasoro constraints~\eqref{vc} into a single equation by multiplying the~$n$th equation by~$x^n$ (where~$x$ is a formal variable) and summing them up:
\begin{multline*}
\sum_{n=0}^\infty x^n\Biggl(-\frac1s(n+1)\pd{F}{p_{n+1}}+(u+v)\,n\pd{F}{p_n}+
\sum_{j=1}^\infty p_j(n+j)\pd{F}{p_{n+j}}\Biggr.\\\Biggl.+
\sum_{i+j=n}ij\left(\pd{^2F}{p_i\D p_j}+\pd{F}{p_i}\pd{F}{p_j}\right)\Biggr)+uv=0.
\end{multline*}
This equation can be simplified. Notice that
\begin{align*}
\sum_{n=0}^\infty&\; x^n\left(-\frac1s(n+1)\pd{F}{p_{n+1}}+(u+v)\,n\pd{F}{p_n}\right)
  =\left(-\frac1s+(u+v)\,x\right)\d_x F\;,\\
\sum_{n=0}^\infty&\; x^n\sum_{i+j=n}ij\left(\pd{^2F}{p_i\D p_j}+\pd{F}{p_i}\pd{F}{p_j}\right)=
 x^2\left(\d_x^2 F+(\d_x F)^2\right)\;,
\end{align*}
where, as in the previous section, $\d_x=\sum_{n=1}^\infty nx^{n-1}\pd{}{p_n}$ (cf. Remark \ref{remdelta}).
As for the third term in the sum, we use the identity $p_j=\d_y^{-1}(j\,y^{j-1})=\d_y^{-1}d_y(y^j)$, where~$y$ is a new independent formal variable and $d_y=\frac{d}{dy}$, to re-write it as follows:
\begin{align*}
\sum_{n=0}^\infty\; x^n\sum_{j=1}^\infty& p_j(n+j)\pd{F}{p_{n+j}}\\
   &=\sum_{n=0}^\infty\;\sum_{i+j=n} x^{i}\,p_j\,n\,\pd{F}{p_n}
   =\d_y^{-1} d_y\left(\sum_{n=0}^\infty\;\sum_{i+j=n} x^{i}\,y^j\,n\,\pd{F}{p_n}\right)\\
   &=\d_y^{-1} d_y\left(\sum_{k=0}^\infty \frac{x^{k+1}-y^{k+1}}{x-y}\, k\,\pd{F}{p_k}\right)
   =\d_y^{-1} d_y\left(\frac{x^2{\d_x F}-y^2\d_y F}{x-y}\right)
\end{align*}
This yields the following \emph{master Virasoro equation} that unifies all Eqs.~\eqref{vc}:
\begin{align}
\Bigl(-\frac1s+(u+v)\,x\Bigr)\d_x F&+x^2\left(\d_x^2 F+(\d_x F)^2\right)\nonumber\\
&+\d_y^{-1} d_y\left(\frac{x^2\d_x F-y^2\d_y F}{x-y}\right)+uv=0\;.
\label{mve}
\end{align}

\subsection{Unstable terms and the spectral curve}

Our immediate goal is to extract the homogeneous terms in~\eqref{mve} contributing to $\d_x F_{g,n}$ for fixed~$g$ and~$n$. We start with the unstable cases. For~$g=0$ and~$n=1$ we get
\begin{equation}
x^2\,(\d_x F_{0,1})^2+\Bigl(-\frac1{s}+(u+v)\,x\Bigr)\d_x F_{0,1}+u v=0\,.\label{eqspectr}
\end{equation}
Solving this equation for $x\,\d_x F_{0,1}$, choosing the proper root and expanding it into the Taylor series at~$x=0$ we get
\begin{align*}
x\,\d_x F_{0,1}&=\frac{1}{2} \left(\frac{1}{s x}-u-v-\sqrt{\left(\frac{1}{s x}-u-v\right)^2-4 u v}\right)\\
&=u\,v\,s\,x+u\,v\,(u+v)\,s^2x^2+ u\,v\,\left(u^2+3\,u\,v+v^2\right)\,s^3x^3 +\dots
\end{align*}
and
$$F_{0,1}=u\,v\,s\,p_1+\frac12u\,v\,(u+v)\,s^2p_2+\frac13u\,v\,\left(u^2+3\,u\,v+v^2\right)\,s^3p_3 +\dots.$$

\begin{definition} The \emph{spectral curve} is the Riemann surface of the algebraic function $x\,\d_x F_{0,1}$ in the $x$-variable.
\end{definition}

In other words, the spectral curve is an algebraic curve such that~$x$ and~$x\d_x F_{0,1}$ are globally defined univalued meromorphic functions on it. In our case the spectral curve is given by~\eqref{eqspectr}. It is rational (admits a rational parametrization). Let~$z$ be an affine coordinate on~$\pl$. Its choice is not important, but, for convenience, we choose it in such a way that the two critical points of the function~$x$ on~$\pl$ are~$z=0$ (with the critical value~$x=\frac{1}{s\b}$) and~$z=\infty$ (with the critical value~$x=\infty$). The corresponding rational parametrization has the following form:
\begin{align*}
x&=\frac{z^2-1}{s\,(\a  z^2-\b)},&
z&=\sqrt{\frac{1-\b  s x}{1-\a s x}},\\
x\,\d_x F_{0,1}&=\sqrt{uv}\,\frac{1-z}{1+z},&\d_x F_{0,1}\frac{dx}{dz}&=\frac{8\,u\,v\,z}{(z+1)^2 \left(\alpha z^2-\beta \right)},
\end{align*}
where~$\a,\b$ are related to~$u,v$ by~\eqref{eqabs}.
All functions entering these equalities can be regarded either as rational functions in $z$-variable or as formal power expansions of these functions at~$z=1$.

We continue with the terms with~$g=0$ and~$n=2$ in~\eqref{mve}. We have
$$\Bigl(-\frac1s+(u+v)x\Bigr)\d_x F_{0,2}+2\,x^2\,\d_x F_{0,1}\,\d_x F_{0,2}
+\d_y^{-1} d_y\left(\frac{x^2\,\d_x F_{0,1}-y^2\,\d_y F_{0,1}}{x-y}\right)
=0\,,$$
from where we get
$$\d_y\,\d_x\,F_{0,2}=\frac{d_y\left(\frac{x^2\,\d_x\, F_{0,1}-y^2\,\d_y\, F_{0,1}}{x-y}\right)}
     {\frac1s-(u+v)x-2\,x^2\,\d_x F_{0,1}}\;.$$
This equality uniquely determines~$\d_y\d_x F_{0,2}dxdy$ as a meromorphic bidifferential on~$\pl\times\pl$. Substituting the obtained above expressions for~$x,\;dx/dz$ and $\d_x F_{0,1}$ into the last formula, after some miraculous cancellations we finally get
\begin{equation}
\d_y\d_x F_{0,2}\,dx\,dy=\frac{dz\,dw}{(z+w)^2}=-B(z,-w),
\label{ddF02}\end{equation}
where $B(z,w)=\frac{dz\,dw}{(z-w)^2}$ is the Bergman kernel, and $w$ is related to $y$ by the same formulas \eqref{eqzx} that relate $z$ to $x$.

\subsection{Rational recursion formula}

Now we look at the homogeneous terms of genus $g$ and degree $m$ with $2g-2+m>1$ in \eqref{mve}. To begin with, let us extract the unstable terms from the expression~$(\d F)^2$ in~\eqref{mve} in order to re-group them with the other summands. Multiplying~\eqref{mve} by $x^{-2}$ we get
\begin{align*}
&2\left(-\frac{1}{2\,s\,x^2}+\frac{u+v}{2\,x}+\d_x F_{0,1}\right)\,\d_x(F-F_{0,1}-F_{0,2})+\d_x^2 F\\
&\quad +\bigl(\d_x(F-F_{0,1}-F_{0,2})\bigr)^2+x^{-2}\d_y^{-1}\,d_y\,\left(\frac{x^2\,\d_x(F-F_{0,1})-y^2\,\d_y(F-F_{0,1})}{x-y}\right)\\
&\hspace{2.4in}+2\d_x(F-F_{0,1})\,\d_x F_{0,2}-(\d_x F_{0,2})^2=0.
\end{align*}
Let us rewrite the coefficients of this equation in the $z$-coordinate. It is convenient to put
$$
\eta=\eta(x)\,dx=-\left(-\frac{1}{2\,s\,x^2}+\frac{u+v}{2\,x}+\d_x F_{0,1}\right)\,dx\;,
$$
so that in terms of the coordinate $z$
$$
\eta=\eta(z)\,dz=\frac{(\a-\b)^2\,z^2}{(z^2-1)^2 (\a\,z^2 -\b)}\,dz\,. 
$$
Then, using the already known expressions for~$x$, $\frac{dx}{dz}$,~$\d_x F_{0,1}$, and~$\d_y\d_x F_{0,2}$, we find that
\begin{align}
\d_x F_{0,2}=-\d_y^{-1}\,d_y\left(\frac{1}{z+w}\right).
\label{delf}
\end{align}
From the above identities we obtain the following equation for~$\d_x F$:
\begin{align}
\d_x(F-F_{0,1}-F_{0,2})&=\frac{1}{2\,\eta(x)}\Bigl(\d_x^2 F+(\d_x(F-F_{0,1}-F_{0,2}))^2-(\d_xF_{0,2})^2\Bigr)\nonumber\\
+\d_y^{-1}\,d_y&\left(\frac{w}{z^2-w^2}\left(\frac{\d_x(F-F_{0,1})}{\eta(x)}-\frac{\d_y (F-F_{0,1})}{\eta(y)}\right)\right)\,
\frac{dz}{dx}\;.\label{eqvirz}
\end{align}
In particular, for the homogeneous components $U_{g,m}=\d_x F_{g,m}$ with~$2g-2+m>1$ this equation reads
\begin{multline}
U_{g,m}(z)=\frac{1}{2\,\eta(z)}\Biggl(\d_z U_{g-1,m+1}(z)+\mathop{\sum{}^{\mathrlap{*}}}_{\substack{g_1+g_2=g,\\m_1+m_2=m+1}}
U_{g_1,m_1}(z)\,U_{g_2,m_2}(z)\Biggr)\\
 {}+\d_w^{-1}\,d_w\left(\frac{w}{z^2-w^2}\left(\frac{U_{g,m-1}(z)}{\eta(z)}-\frac{U_{g,m-1}(w)}{\eta(w)}\right)\right)\;,
 \label{eqrat}\end{multline}
where the star $^*$ by the summation sign means that the unstable terms with $2g-2+m\leq 0$ are omitted. 
We refer to this equation as the \emph{rational recursion formula} for the forms~$U_{g,m}=U_{g,m}\,(z)dz$. 

This equation allows to prove the polynomiality property for the forms~$U_{g,m}$ by induction in~$g$ and~$m$. Indeed, assume that~$U_{g',m'}$ is a Laurent form in~$z$ with coefficients polynomially depending on~$t_k=T_k(p)$, $k\in Z_\odd$, for all~$(g',m')$ with $0<2g'-2+m'<2g-2+m$. Then the first summand in~\eqref{eqrat} obviously also has this form. Let us examine the second summand. Note that the operator $\d_w^{-1}\,d_w$ is well defined on the space of odd Laurent polynomials in variable $w$, so let us check that this condition always holds. Indeed,
the function~$\frac{U_{g,m-1}(z)}{\eta(z)}$ is an even Laurent polynomial in~$z$, therefore, it can be represented as a Laurent polynomial in~$z^2$. Therefore, $\frac1{z^2-w^2}\left(\frac{U_{g,m-1}(z)}{\eta(z)}-\frac{U_{g,m-1}(w)}{\eta(w)}\right)$ is a Laurent polynomial in~$z^2$ and~$w^2$ regular at~$z=\pm w$.  Multiplying it by~$w$ and applying~$d_w$ we obtain an odd Laurent form in~$w$. The polynomiality property for the forms~$U_{g,m}$ follows now from Remark~\ref{remdelta}.

Utilising Remark~\ref{remdelta} once again, we obtain the polynomiality property for~$F_{g,m}$ with~$2g-2+m>0$ as well. This proves the main assertion of Theorem~\ref{th2} (under the assumption that it is valid for the initial terms~$F_{0,3}$ 
and~$F_{1,1}$; this is checked below).

\subsection{The residual formalism}

Since the both sides of~\eqref{eqrat} belong to~$L$, it is convenient to equate the projections of the terms entering this equality by applying~$P_L$ to each of them. The terms of the first summand on the right hand side already belong to~$L$, so~$P_L$ is identical on them. Compute the image of the second summand on the right hand side under the projection~$P_L$. The key observation is that~$P_L$ can be applied to the two terms of this summand separately. In particular, the form~$\frac{dz}{z^2-w^2}$ is regular both at~$z=0$ and at~$z=\infty$ so that it does not contribute to the image. It remains to compute the term
\begin{align*}
&P_L\!\left(\d_w^{-1}d_w\!\left(\frac{w}{z^2-w^2}\frac{U_{g,m-1}(z)}{\eta(z)}\right)dz\right)\\
&\hspace{2in}=P_L\!\left(\frac{U_{g,m-1}(z)}{\eta(z)}\,\d_w^{-1}\!\left(\frac{(z^2+w^2)}{(z^2-w^2)^2}dz\,dw\right)\right).
\end{align*}
The form $\frac{(z^2+w^2)}{(z^2-w^2)^2}\,dz\,dw$ is not Laurent so that~$\d_w^{-1}$ is not applicable to it directly. However, what we actually need in order to apply~$P_L$ is the expansion of this form at~$z=0$ and~$z=\infty$. The coefficients of these expansions are Laurent with respect to~$w$:
\begin{align*}
&\frac{(z^2+w^2)}{(z^2-w^2)^2}dz\,dw=-\sum_{i=0}^\infty d(w^{-2i-1})z^{2i}dz=
  -\d_w\sum_{i=0}^\infty t_{-2i-1}z^{2i}dz,\quad z\to 0,\\
&\frac{(z^2+w^2)}{(z^2-w^2)^2}dz\,dw=-\sum_{i=0}^\infty d(w^{2i+1})z^{-2i}d(z^{-1})\\
&\hspace{2.2in}=-\d_w\sum_{i=0}^\infty t_{2i+1}z^{-2i}d(z^{-1}),\quad z\to \infty.
\end{align*}
In other words, the form $\d_w^{-1}\Bigl(\frac{(z^2+w^2)}{(z^2-w^2)^2}\,dz\,dw\Bigr)$ is well defined in some neighborhoods of the points~$z=0$ and~$z=\infty$ and coincides with the form~$U_{0,2}$ defined by~\eqref{U02}. This proves the equality~\eqref{eq2} of the topological recursion.

\subsection{Initial terms of the recursion}

In order to finish the proof of Theorem~\ref{th2}, it remains to check it for the initial terms of the recursion, that is, for~$U_{1,1}=\d_x F_{1,1}\,dx$ and~$U_{0,3}=\d_x F_{0,3}dx$.

For the case~$g=m=1$, equating the corresponding terms in~\eqref{eqvirz} we get
$$U_{1,1}=\frac{1}{2\eta(x)}\d_x^2F_{0,2}\,dx\,,$$
and using the explicit formula~\eqref{ddF02} for~$\d^2F_{0,2}$, we obtain the required formula~\eqref{eq4} for~$U_{1,1}$.

For the case~$g=0$,~$m=3$ the computations are slightly more involved. Eq.~\eqref{eqvirz} uniquely determines the form~$U_{0,3}$ as
$$U_{0,3}=
 \d_w^{-1}\, d_w\left(\frac{w}{z^2-w^2}\left(\frac{\d_x F_{0,2}(x)}{\eta(x)}-\frac{\d_y F_{0,2}(y)}{\eta(y)}\right)\right)\,dz
 -\frac{(\d_x F_{0,2})^2}{2\,\eta(x)}\,dx\;,
$$
where, as before, $y$ is related to $w$ by the same formulas \eqref{eqzx} that relate $x$ to $z$.

The form~$\d_x F_{0,2}\,dx$ being known, cf.~\eqref{delf}, we directly compute
$$U_{0,3}=\frac{1}{32 u v}\left(\a\,t_1^2dz-\b t_{-1}^2\frac{dz}{z^2} \right)=P_L\left(\frac{(U_{0,2}(z))^2}{2\eta(z)}\,dz\right)$$
which agrees with~\eqref{eq3}. This completes the proof of Theorem~\ref{th2}.

\section{Topological recursion for ribbon graphs}

Here we discuss the toplogical recursion for the numbers $D_{g,m}$ of genus $g$ ribbon graphs with $m$ labeled boundary components of lengths $\mu_1,\ldots,\mu_m$, see Section 2. A topological recursion for these numbers was first obtained in~\cite{EO1}, 
Theorem 7.3 (it was later rediscovered in \cite{DMSS}). We present a simple proof of this recursion in terms of the homogeneous components~$\tF_{g,m}$ of the generating function~\eqref{gfM}
that follows directly from the Virasoro constraints, cf. Theorem~\ref{rg},~(i). In fact, the argument is quite parallel to that of the case of dessins d'enfants. This is why we skip the details paying attention only to the differences between these two enumerative problems. More specifically, we have the same spectral curve $\C P^1$ equipped with an affine coordinate~$z$ and the 
involution~$z\mapsto-z$. What is different, is the choice of the local coordinate~$x$ at the point~$z=1$ and the form~$\eta$.

Consider the linear functions $\tT_k(p)$ given by~\eqref{eqT} with
\begin{equation}\label{zxM}
z(x)=\sqrt{\frac{1+2\sqrt{u}\; s\, x}{1-2\sqrt{u}\; s\, x}}
\end{equation}

\begin{theorem}\label{th3}
Let $\tF_{g,m}$ be the infinite series defined by
\begin{align*}
\tF_{g,m}(s,u,p_1,p_2,\ldots)
&=\frac{1}{m!} \sum_{\mu\in{\Z}_+^m} D_{g,m}(\mu)s^d u^k p_{\mu_1}\ldots p_{\mu_m}\;.
\end{align*} 
Then
\begin{enumerate}[(i)]
\item For each~$g,m$ with~$2g-2+m>0$ there exist a polynomial~$\tG_{g,m}$ of the variables~$t_j$, $j\in\Z_\odd$, such that
$$\tF_{g,m}(p)=\tG_{g,m}(t)\bigm|_{t_j=\tT_j(p)}$$
(i.e. each~$\tF_{g,m}$ is a polynomial in the linear functions $\tT_{\pm1},\tT_{\pm3},\tT_{\pm5},\ldots$).
\item The polynomials~$\tG_{g,m}$ can be recursively computed, starting from $\tG_{0,3}$ and $\tG_{1,1}$, by the same Eqs.~\eqref{eq2}--\eqref{eq4} with $\eta$ given by the formula
$$
\eta=\eta(z)\,dz=-\frac{16uz^2dz}{(1-z^2)^3}\;.
$$
\end{enumerate}
\end{theorem}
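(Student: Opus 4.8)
The strategy is to run the same argument as in the proof of Theorem~\ref{th2}, replacing the role of the master Virasoro equation~\eqref{mve} by its analogue coming from Theorem~\ref{rg}~(i). First I would assemble the constraints $\tL_n e^\tF=0$ for $n\ge-1$ into a single generating identity by multiplying the $n$-th equation by $x^{n+2}$ and summing over $n\ge-1$. Using the same manipulations as before --- the substitutions $\d_x=\sum_{i\ge1}ix^{i-1}\frac{\D}{\D p_i}$, the identity $p_j=\d_y^{-1}d_y(y^j)$, and the telescoping of $\sum_{i+j=n}x^iy^j$ into $\frac{x^{k+1}-y^{k+1}}{x-y}$ --- this produces a \emph{master Virasoro equation for} $\tF$ of the shape
\begin{align*}
\Bigl(-\frac1{s^2}+2u\,x^2\Bigr)\d_x\tF+x^3\bigl(\d_x^2\tF+(\d_x\tF)^2\bigr)
+x\,\d_y^{-1}d_y\!\left(\frac{x^2\d_x\tF-y^2\d_y\tF}{x-y}\right)+u\,p_1+u^2x=0
\end{align*}
(the exact numerical coefficients are a routine bookkeeping matter; the key point is that the $-\frac{n+2}{s^2}\frac{\D}{\D p_{n+2}}$ and $2un\frac{\D}{\D p_n}$ terms contribute $(-\frac1{s^2}+2ux^2)\d_x\tF$, the double-derivative term contributes $x^3(\d_x^2\tF+(\d_x\tF)^2)$, and the $p_j(n+j)\frac{\D}{\D p_{n+j}}$ term contributes the double-residue piece, while the anomalies $\d_{-1,n}up_1$ and $\d_{0,n}u^2$ survive multiplied by $x$ and $x^2$ respectively).

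Next I would extract the unstable sectors. For $(g,m)=(0,1)$ the equation degenerates to a quadratic for $x\d_x\tF_{0,1}$,
$$x^3(\d_x\tF_{0,1})^2+\Bigl(-\frac1{s^2}+2ux^2\Bigr)\d_x\tF_{0,1}+u^2x=0,$$
whose Riemann surface is the spectral curve; solving, choosing the root regular at $x=0$, and feeding in the parametrization~\eqref{zxM} one checks that $x$ and $x\d_x\tF_{0,1}$ become rational in $z$, with the two critical points of $x$ again at $z=0$ and $z=\infty$. Then one reads off $\eta=\eta(x)dx=-\bigl(\text{the coefficient of }\d_x\tF_{0,1}\text{ in the linearization}\bigr)dx$; after substituting $x=x(z)$ and $dx/dz$ the miraculous cancellations should collapse this to $\eta(z)dz=-\frac{16uz^2dz}{(1-z^2)^3}$, which is the claimed form. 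For $(g,m)=(0,2)$ the linearized equation determines $\d_y\d_x\tF_{0,2}$, and one verifies it equals $\frac{dz\,dw}{(z+w)^2}=-B(z,-w)$, exactly as in~\eqref{ddF02}; this is forced because the structure of the $p_j(n+j)$-term is identical to the dessin case, so the same computation goes through. Consequently $U_{0,2}$ is given by the \emph{same} formal expansions~\eqref{U02}, and $\tT_k(p)$ plays the role of $T_k(p)$.

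Having isolated the unstable pieces, I would rewrite the master equation in the $z$-coordinate exactly as in Section~5.3, peeling off $\tF_{0,1}$ and $\tF_{0,2}$, to obtain for $2g-2+m>1$ the rational recursion formula~\eqref{eqrat} verbatim (with $U_{g,m}=\d_x\tF_{g,m}$ and the new $\eta$). Polynomiality of the $U_{g,m}$ in the $t_k=\tT_k(p)$ then follows by the same induction, using that $U_{g,m-1}/\eta$ is an even Laurent polynomial in $z$ (here it is crucial that the new $\eta(z)=-16uz^2/(1-z^2)^3$ is, like before, odd with a double zero at $z=0$ and appropriate behaviour at $\infty$, so that the operator $\d_w^{-1}d_w$ applies and Remark~\ref{remdelta} gives the conclusion); then polynomiality of $\tF_{g,m}$ follows from Remark~\ref{remdelta} again. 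Applying $P_L$ to both sides and using the same expansion of $\frac{(z^2+w^2)}{(z^2-w^2)^2}dz\,dw$ recovers~\eqref{eq2}, and the initial cases $\tU_{1,1},\tU_{0,3}$ are handled precisely as in Section~5.5 (equate terms in the $z$-rewritten equation, plug in $\d_x\tF_{0,2}$, simplify). This establishes both~(i) and~(ii).

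\textbf{Main obstacle.} The only real work --- and the step most likely to hide a subtlety --- is the explicit rational-parametrization computation for the unstable sector: one must verify that the coordinate change~\eqref{zxM} is exactly the one that sends the two critical points of $x$ to $z=0,\infty$ and that, after substitution, $\eta(x)dx$ simplifies to $-16uz^2dz/(1-z^2)^3$ with no leftover terms. If this ``miraculous cancellation'' (the analogue of~\eqref{ddF02} and the $\eta$-formula in Section~5.2) fails to come out clean, the value of $\eta$ stated in the theorem would have to be re-derived; but given that the quadratic for $\d_x\tF_{0,1}$ has the same algebraic shape as~\eqref{eqspectr} and the quantum curve~\eqref{qc}-analogue in the Remark already pins down the $z(x)$ relation, I expect everything to go through with only the numerical coefficients needing care. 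Everything downstream of the spectral curve --- the rational recursion, polynomiality, the $P_L$-projection, the initial terms --- is structurally identical to the dessin case and requires no new ideas.
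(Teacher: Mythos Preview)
Your plan is correct and matches the paper's own proof essentially step for step: assemble the constraints of Theorem~\ref{rg}(i) into a master Virasoro equation, read off the spectral curve and $\tilde\eta$ from the $(0,1)$ sector, verify that the $(0,2)$ correlator is again $-B(z,-w)$, and conclude that~\eqref{eqvirz}--\eqref{eqrat} hold verbatim with the new $\eta$, whence polynomiality and~\eqref{eq2}--\eqref{eq4} follow exactly as before. The only slip is that to produce the leading term $-\tfrac{1}{s^{2}}\d_x\tF$ you should multiply $\tL_n$ by $x^{n+1}$ rather than $x^{n+2}$, which gives the third summand as $\d_y^{-1}d_y\bigl(\tfrac{x^{3}\d_x\tF-y^{3}\d_y\tF}{x-y}\bigr)$; as you anticipated, this is pure bookkeeping and changes nothing structurally.
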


\begin{proof}
Like in the case of dessins, we start with the master Virasoro equation that readily follows from Theorem~\ref{th2},~(i):
\begin{align*}
\Bigl(-\frac1{s^2}+2\,u\,x^2\Bigr)\,\d_x \tF&+x^3\Bigl(\d_x^2 \tF+(\d_x \tF)^2\Bigr)\\
&+\d_y^{-1}\,d_y\left(\frac{x^3\,\d_x \tF-y^3\,d_y \tF}{x-y}\right)+u^2\,x+u\,p_1=0\;.
\end{align*}
The spectral curve in this case (an analog of \eqref{eqspectr} above) is given by the equation
$$x^2\,(\d_x\tF_{0,1})^2-x\,\d_x\tF_{0,1}\Bigl(\frac{1}{s^2 x^2}-2 u\Bigr)+u^2=0\,.$$
Solving this equation for $x\,\d_x\tF_{0,1}$ we get the following rational parametrization of the spectral curve
\begin{align*}
&x(z)=\frac{z^2-1}{2s\sqrt{u}\;(z^2+1)}\;,\\
&x\,\d_x\tF_{0,1}=\frac{1-2\,s^2\,u\,x^2-\sqrt{1-4\,s^2\,u\,x^2}}{2s^2\,x^2}=u\left(\frac{1-z}{1+z}\right)^2\;,
\end{align*}
with the inverse change $z(x)$ given by~\eqref{zxM}.
The genus $0$ two point correlator is the same as in the case of dessins:
$$\d_y\d_x\tF_{0,2}\,dx\,dy=\frac{dz\,dw}{(z+w)^2},$$
and instead of the form $\eta$ we have
$$\tilde\eta=\tilde\eta(x)dx=\left(-\d_x \tF_{0,1}-\frac{u}{x}+\frac{1}{2\,s^2\,x^3}\right)\,dx=-\frac{16\,u\,z^2}{(1-z^2)^3}\,dz\;.$$
Thus, the master Virasoro equation in $z$-coordinate acquires the same form~\eqref{eqvirz} and implies the topological recursion~\eqref{eqrat} with $\eta$ replaced by $\tilde\eta$.
\end{proof}

Below are the first few terms of the recursion:
\begin{align*}
U_{0,3}&=\frac{1}{32\,u}\left(t_1^2-t_{-1}^2z^{-2}\right)\,,\\[-12pt]\\
G_{0,3}&=\frac{1}{96\,u} \left(t_{1}^3+t_{-1}^3\right)\,,\\[-12pt]\\
U_{1,1}&=\frac{1}{128\,u}\left(z^2-3+3\,z^{-2}-z^{-4}\right)\,,\\[-12pt]\\
G_{1,1}&=\frac{1}{384\,u}\left(t_{3}-9 t_{1}-9 t_{-1}+t_{-3}\right)\,,\\[-12pt]\\
U_{0,4}&=\frac{1}{512\,u^2}\left(t_1^3 z^2+(t_3 t_1^2-3 t_1^3-t_{-1}^2 t_1)\right.\\[-12pt]\\
&\hspace{.9in}\left.+(t_1^2 t_{-1}+3 t_{-1}^3-t_{-3} t_{-1}^2)\,z^{-2}-t_{-1}^3\,z^{-4}\right)\,,\\[-12pt]\\
G_{0,4}&=\frac{1}{6144\,u^2}\left(4 t_1^3 t_3-9 t_1^4-6 t_1^2 t_{-1}^2-9 t_{-1}^4+4 t_{-3} t_{-1}^3\right)\,,\\[-12pt]\\
U_{1,2}&=\frac{1}{2048\,u^2}\left(5\,t_1\,z^4+(t_3-18 t_1)\,z^2+(4 t_{-1}+27 t_1-6 t_3+t_5)\right.\\[-12pt]\\
 &\hspace{.3in}\left.+(-t_{-5}+6 t_{-3}-27 t_{-1}-4 t_1)\,z^{-2}+(18 t_{-1}-t_{-3})\,z^{-4}-5\,t_{-1}\,z^{-6}\right)\,,\\[-12pt]\\
G_{1,2}&=\frac{1}{12288\,u^2}\left(6 t_1 t_5+t_3^2-36 t_1 t_3+81 t_1^2+24 t_{-1} t_1\right.\\[-12pt]\\
 &\hspace{2in}\left.+81 t_{-1}^2-36 t_{-1} t_{-3}+t_{-3}^2+6 t_{-5} t_{-1}\right)\,.
\end{align*}

\medskip
\noindent
{\bf Acknowledgments.}
The main results of the paper, Theorems 4 and 5, were obtained under support of the Russian Science Foundation grant 14-21-00035. The work of MK was additionally supported by the President of Russian Federation grant NSh-5138.2014.1 and by the RFBR grant 13-01-00383. PZ acknowledges hospitality of the Center for Quantum Geometry of Moduli Spaces at Aarhus University. We thank JSC ``Gazprom Neft" for funding short-term visits of MK to the Chebyshev Laboratory at SPbSU. We are grateful to L.~Chekhov, B.~Eynard, P.~Norbury and G.~Schaeffer for useful discussions, and to the anonymous referee for correcting a few typos and suggesting several improvements in the text.

\end{document}